\documentclass[10pt,a4paper]{amsart}
\usepackage{amscd,amssymb,amsopn,amsmath,amsthm,graphics,amsfonts,enumerate,verbatim,calc}
\usepackage[dvips]{graphicx}
\input xy
\xyoption{all}

\usepackage{centernot}
\usepackage{amssymb,amsmath}
\usepackage{mathpazo}

\headsep=1cm \numberwithin{equation}{section}
\newtheorem{theorem}{Theorem}[section]

\newtheorem{lemma}[theorem]{Lemma}
\newtheorem{proposition}[theorem]{Proposition}
\newtheorem{corollary}[theorem]{Corollary}
\newtheorem{claim}[theorem]{Claim}

\theoremstyle{definition}

\theoremstyle{remark}
\newtheorem{remark}[theorem]{Remark}

\newtheorem{example}[theorem]{Example}
\newtheorem{problem}[theorem]{Problem}
\newtheorem{fact}[theorem]{Fact}
\newtheorem{notation}[theorem]{Notation}

\newcommand{\Spec}{\operatorname{Spec}}

\newcommand{\nil}{\operatorname{nil}}

\newcommand{\rad}{\operatorname{rad}}

\newcommand{\kk}{\operatorname{k}}

\newcommand{\fm}{\frak{m}}

\newcommand{\q}{\frak{q}}

\begin{document}
	
	\author[]{Mohsen Asgharzadeh}
	
	\title[Flat radical pullback]{Descent along flat composed  with radicalization}
	
	\address{M. Asgharzadeh }
	\email{mohsenasgharzadeh@gmail.com}

	\subjclass[2020]{13C14;}
	
	\keywords{Regular rings; nillpotents; desecnt; flatness; radicalization; F-singularity}
\begin{abstract}
We study the following general situation. Let $R\to S$ be a finite flat morphism of regular rings of zero (resp. prime) characteristic. For  $P\in\Spec R$, put $A=R/P,  C=S/PS,$ and $B=S/\sqrt{PS}=C_{\mathrm{red}}.$
The basic question is whether a property of $B$ forces the same property
of $A$. The main point is that ordinary finite-flat descent applies naturally to $A\to C$, whereas the passage $C\to C_{\mathrm{red}}$ may destroy nilpotent information. 
\end{abstract}
\maketitle

\section{Introduction}

A recurring problem in commutative algebra is to understand which
properties descend along finite flat homomorphisms.  Let
\(
R\to S
\)
be a finite flat homomorphism and let \(P\in \operatorname{Spec}(R)\).
A natural question is whether properties of the ring
\(
S/\sqrt{PS}
\)
force the corresponding properties of
\(
R/P.
\)
Although descent along finite flat maps is well understood for many
algebraic properties, the passage from \(S/PS\) to its reduction
\(S/\sqrt{PS}\) introduces a new difficulty: it involves a nilpotent
thickening.
To make this distinction clear, throughout the paper we consider the
diagram
\[
R/P \longrightarrow S/PS \longrightarrow S/\sqrt{PS}.
\]
The first map is finite flat, while the second map is a quotient by the
nilradical.  Thus, questions involving \(S/\sqrt{PS}\) naturally
separate into two different problems:
descent along finite flat morphisms and stability under nilpotent
extensions.
A motivating example is the following problem of Lipman.
\begin{problem}(J. Lipman \cite{j}, C. Huneke, and others).\label{1.1}
	Let
	\(
	R\to S
	\)
	be an injective map of regular rings of  characteristic zero such that $S$ is finite and (flat) over $R$, and let $P\subset R$ be prime. If $S/\sqrt{PS}$ is regular, then is $R/P$ regular?
\end{problem} We present the following:
\begin{theorem}\label{142}
	Let
	\(
	R=\boldsymbol{W}(\mathbb F_2)[[x,y]]\),
	\(
	S=\boldsymbol{W}(\mathbb F_2)[[t,y]],
	\)
	and define
	\(\varphi:R\to S\)
	by
	\(
	\varphi(x)=t^2, 
	\varphi(y)=y.
	\)
	Let
	\(
	P=(2,\ y^2-x^3)\subset R.
	\)
	Then:
	\begin{enumerate}
		\item \(R\) and \(S\) are complete regular local rings of characteristic zero;
		\item \(S\) is finite free of rank \(2\) over \(R\) and \(P\) is prime;

		\item
		\(
		S/\sqrt{PS}
		\)
		is regular and
		\(
		R/P
		\)
		is not regular.
	\end{enumerate}

\end{theorem}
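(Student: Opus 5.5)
The plan is to reduce everything to explicit computations in characteristic $2$, using $\boldsymbol{W}(\mathbb F_2)=\mathbb Z_2$, which is a complete DVR with uniformizer $2$ and residue field $\mathbb F_2$. For (1), $R=\mathbb Z_2[[x,y]]$ and $S=\mathbb Z_2[[t,y]]$ are power series rings over a complete regular local ring. Hence they are complete regular local rings of dimension $3$, with maximal ideals $(2,x,y)$ and $(2,t,y)$. They have characteristic zero since $\mathbb Z\subset\mathbb Z_2$.

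For (2), I would show that $\varphi$ makes $S$ a free $R$-module with basis $1,t$. Indeed $\mathbb Z_2[[t]]$ is free over $\mathbb Z_2[[t^2]]$ on $1,t$: split a power series into its even and odd parts, $f(t)=g(t^2)+t\,h(t^2)$. The same decomposition applied coefficientwise in $y$ gives $S=R\oplus Rt$, so $S$ is finite free of rank $2$, and $\varphi$ is injective. For primality of $P$, I compute
\[
R/P\cong \mathbb F_2[[x,y]]/(y^2-x^3).
\]
It then suffices that $y^2-x^3$ is irreducible in the UFD $\mathbb F_2[[x,y]]$. By Weierstrass preparation, a factorization of this monic degree-$2$ polynomial in $y$ would have to be $(y-a)(y+a)$ with $a\in\mathbb F_2[[x]]$ and $a^2=x^3$. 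This is impossible since $x^3$ has odd order. Hence $R/P$ is a domain and $P$ is prime.

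For (3), non-regularity of $R/P$ is immediate. It is a $1$-dimensional local ring whose maximal ideal $(x,y)$ needs two generators, because $y^2-x^3\in(x,y)^2$. So its embedding dimension $2$ exceeds its dimension $1$. On the other side,
\[
PS=(2,\ y^2-t^6),\qquad S/PS\cong \mathbb F_2[[t,y]]/(y^2-t^6).
\]
The key observation, and the only real point of the example, is the characteristic-$2$ identity $y^2-t^6=y^2+t^6=(y+t^3)^2$. So $S/PS=\mathbb F_2[[t,y]]/((y+t^3)^2)$ is non-reduced. Since $(y+t^3)$ is a prime ideal of $\mathbb F_2[[t,y]]$ (its quotient is $\mathbb F_2[[t]]$), the radical of $PS$ is $\sqrt{PS}=(2,\ y+t^3)$. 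Therefore
\[
S/\sqrt{PS}\cong \mathbb F_2[[t,y]]/(y+t^3)\cong \mathbb F_2[[t]],
\]
a DVR, which is regular.

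I expect no serious obstacle here. The only step needing care is the irreducibility of $y^2-x^3$ over $\mathbb F_2$, since the radicalization squares away exactly this cusp upstairs. That step is handled by the odd-order argument above.
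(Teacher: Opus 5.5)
Your proposal is correct and follows essentially the same route as the paper. In both, the key step is the characteristic-two identity \(y^2-t^6\equiv (y+t^3)^2 \pmod{2}\), which gives \(\sqrt{PS}=(2,\,y-t^3)\) and \(S/\sqrt{PS}\cong\mathbb F_2[[t]]\), while \(R/P\cong\mathbb F_2[[x,y]]/(y^2-x^3)\) is the cusp.

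The differences are minor. You prove \(P\) prime by showing \(y^2-x^3\) is irreducible in the UFD \(\mathbb F_2[[x,y]]\) (Weierstrass preparation plus the odd order of \(x^3\)); the paper instead identifies \(R/P\) with \(\mathbb F_2[[u^2,u^3]]\subset\mathbb F_2[[u]]\) via Fact~\ref{rel}(i). You also justify non-regularity by an explicit embedding-dimension count, which the paper only asserts. Both changes make your write-up more self-contained.
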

 Let $\mathcal P$ be a property of rings, say for example the regularity.
An slightly more general question appears, namely suppose $B$   has $\mathcal P$. When is
\( A \) satisfy in
\(\mathcal P?
\)

The positive results of the paper explain when this obstruction does
not occur. We show that several finiteness and Frobenius-theoretic
properties descend through the above construction. In particular, if
\(
S/\sqrt{PS}
\)
is coherent (respectively \(F\)-coherent), then under suitable
hypotheses the same holds for \(R/P\). According to \cite{A}, and despite their differences,
\(F\)-coherent rings are very close to weakly \(F\)-nilpotent rings.
We therefore continue this philosophy and prove the analogous results
for weakly \(F\)-nilpotent rings. For rings of prime characteristic, we further study the stronger
property of \(F\)-nilpotence. Using the characterization of
\(F\)-nilpotent rings in terms of perfect closures and balanced big
Cohen--Macaulay algebras, we show that \(F\)-nilpotence descends under
finite weakly étale local homomorphisms. The main technical point is
the compatibility of perfect closures and the associated balanced big
Cohen--Macaulay algebras with weakly étale base change.

The results indicate that radicalization is a fundamental obstruction
to descent. For properties stable under nilpotent thickenings, the
problem reduces to ordinary finite flat descent, while regularity shows
that such stability cannot be expected in general.

The paper is organized as follows. In Section~2 we study properties
preserved under radicalization and finite flat descent, including
coherence and Frobenius-theoretic properties. In Section~3 we discuss
further consequences concerning dimension-theoretic properties, and we give negative answers for several properties, including generalized Cohen--Macaulayness, the Buchsbaum property, complete intersection, and
\(F\)-purity (\(F\)-rationality, and related topics).
In
Section~4 we prove {Theorem} \ref{142}.  We combine the method of {Theorem} \ref{142} along with $\mathcal P:=\{complete-intersection\}$ from Section 3, and  study the corresponding descent approach to $\mathcal P:=\{Gorensteinness\}$.

\section{Some well behaved properties}
 We cite the books \cite{BH} and \cite{MP} for basic definitions that we need.
 \begin{notation}Let \(R\to S\) be flat. Set
 	$
 	A=R/P,  C=S/PS$ and
 	$B=S/\sqrt{PS}.
 	$ \end{notation}
 We first record several finiteness properties which behave well with respect to
 nilpotent thickenings.
 
 \begin{proposition}
 	Let \(A\) be a  ring, \(N\subseteq A\) be a nilpotent
 	ideal, and assume it is finitely generated. Then
 $
 	A\text{ is noetherian}$ iff
 	$A/N$ { is noetherian}.
 Moreover, if \(A\) is artinian, then \(A/N\) is artinian, and conversely
 	if \(A/N\) is artinian, then \(A\) is artinian.
 \end{proposition}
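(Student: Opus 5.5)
The plan is to use the finite filtration
\[
A\supseteq N\supseteq N^2\supseteq\cdots\supseteq N^k=0,
\]
where $k$ is chosen with $N^k=0$. The key observation is that each successive quotient $N^i/N^{i+1}$ is annihilated by $N$, so it is naturally an $A/N$-module. Since $N$ is finitely generated, say by $n_1,\dots,n_r$, the ideal $N^i$ is generated by the finitely many monomials of degree $i$ in the $n_j$. Hence $N^i/N^{i+1}$ is a finitely generated $A/N$-module.

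\emph{Noetherian case.} One direction is immediate: if $A$ is noetherian, then its quotient $A/N$ is noetherian. For the converse, suppose $A/N$ is noetherian. Each $N^i/N^{i+1}$ is a finitely generated module over the noetherian ring $A/N$, so it is a noetherian $A/N$-module. Its $A$-submodules coincide with its $A/N$-submodules, because the $A$-action factors through $A/N$. Therefore each $N^i/N^{i+1}$ is a noetherian $A$-module. Noetherianity of modules is preserved under extensions, via the short exact sequences
\[
0\to N^{i+1}/N^{k}\to N^i/N^k\to N^i/N^{i+1}\to 0 .
\]
A descending induction on $i$, starting from $N^{k-1}=N^{k-1}/N^k$, then shows that $A=N^0/N^k$ is a noetherian $A$-module, that is, a noetherian ring.

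\emph{Artinian case.} If $A$ is artinian, then $A/N$ is artinian, since quotients of artinian rings are artinian; finite generation of $N$ is not even needed here. Conversely, suppose $A/N$ is artinian. Then $A/N$ is noetherian by Akizuki's theorem, so the argument above makes $A$ noetherian. Moreover, every prime of $A$ contains the nilpotent ideal $N$, so $\Spec A$ is homeomorphic to $\Spec(A/N)$, and therefore $\dim A=\dim A/N=0$. A noetherian ring of dimension zero is artinian. Alternatively, one can avoid Akizuki: each $N^i/N^{i+1}$ is a finitely generated module over the artinian ring $A/N$, hence of finite length, and the same extension argument gives that $A$ has finite length as an $A$-module.

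\emph{Main obstacle.} The only genuinely delicate step is the finite generation of the graded pieces $N^i/N^{i+1}$, which is exactly where the hypothesis that $N$ is finitely generated enters. Without it, $N/N^2$ could be an infinitely generated $A/N$-module even when $A/N$ is a field. Once this is in place, everything else reduces to the standard closure of noetherian (respectively finite-length) modules under extensions.
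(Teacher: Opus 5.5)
Your proposal is correct and follows essentially the same approach as the paper: the finite filtration $A\supseteq N\supseteq\cdots\supseteq N^k=0$ whose graded pieces are finitely generated $A/N$-modules, followed by closure of noetherian (respectively artinian or finite-length) modules under extensions. Your main artinian route, via Akizuki's theorem and $\dim A=\dim A/N=0$, is a minor variant; your alternative filtration argument is exactly the paper's, and your proposal spells out more carefully than the paper why $N^i/N^{i+1}$ is finitely generated.
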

 
 \begin{proof}
 	The first assertion is standard. If \(A\) is noetherian, then every
 	quotient of \(A\) is noetherian. Conversely, suppose that \(A/N\) is
noetherian. Since \(N\) is nilpotent, there is an integer \(t\) such
 	that
  $
 	N^t=0.
 	$ Consider the finite filtration
 $
 	A\supseteq N\supseteq N^2\supseteq\cdots\supseteq N^t=0.
 	$  Each quotient \(N^i/N^{i+1}\) is naturally an \(A/N\)-module. Since
 	\(N^i/N^{i+1}\) is finitely generated as an \(A/N\)-module, it is
 	noetherian. It follows inductively from the filtration that \(A\) is
 	noetherian.
 	
 	For the artinian assertion, the same filtration shows that if
 	\(A/N\) is artinian, then each
  $
 	N^i/N^{i+1}
 	$ is an artinian \(A/N\)-module. Hence \(A\) is artinian. The converse
 	follows since quotients of artinian rings are artinian.
 \end{proof}
The finiteness assumption on $N$ is really needed.
\begin{example}
	Let $A:=\kk[X_n:n\in \mathbb{N}]/(X_n^2:n\in \mathbb{N})$, where $k$ is a field. Then $A$ is not noetherian while 
	$A/ \nil(A)=\frac{\kk[X_n:n\in \mathbb{N}]/(X_n^2:n\in \mathbb{N})}{(X_n:n\in \mathbb{N})/(X_n^2:n\in \mathbb{N})}=\kk,$ which is noetherian.
\end{example}

\begin{corollary}
Let $R\to S$ be flat morphism of quasilocal rings, and assume \(\sqrt{PS}/PS\) is finitely generated.
Then  
\[
S/\sqrt{PS}\text{ is noetherian }\Longrightarrow\quad R/P\text{ is noetherian.} 
\]	
\end{corollary}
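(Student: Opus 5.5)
The plan is to factor the problem through the diagram $A=R/P\to C=S/PS\to B=S/\sqrt{PS}$ from the Notation: first lift noetherianity from $B$ to $C$ using the preceding Proposition, and then descend it from $C$ to $A$ by faithfully flat descent. I read ``flat morphism of quasilocal rings'' as a \emph{local} homomorphism $(R,\fm)\to(S,\mathfrak n)$, so that $\fm S\subseteq\mathfrak n$; this is also needed to make sense of the statement.

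\emph{Step 1 (from $B$ to $C$).} Put $N=\sqrt{PS}/PS\subseteq C$. This ideal consists of nilpotent elements, and by hypothesis it is finitely generated, say by $n_1,\dots,n_k$ with $n_i^{e_i}=0$. A finitely generated nil ideal is nilpotent: any product of more than $\sum_i (e_i-1)$ elements of $N$ expands into monomials in the $n_i$, each containing some $n_i^{e_i}$, so $N^t=0$ for $t=\sum_i(e_i-1)+1$. Since $C/N\cong B$ is noetherian, the preceding Proposition gives that $C$ is noetherian.

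\emph{Step 2 (faithful flatness of $A\to C$).} Flatness is stable under base change, so $A\to C=S\otimes_R R/P$ is flat. The map is still local: $C$ is quasilocal with maximal ideal $\mathfrak n/PS$, and $PS\neq S$ because $PS\subseteq\fm S\subseteq\mathfrak n$. A flat local homomorphism is faithfully flat, since $(\fm/P)C\neq C$. Hence $A\to C$ is faithfully flat, and in particular $IC\cap A=I$ for every ideal $I\subseteq A$.

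\emph{Step 3 (descent).} Let $I_1\subseteq I_2\subseteq\cdots$ be an ascending chain of ideals of $A$. The extended chain $I_nC$ stabilizes because $C$ is noetherian. Contracting and using $I_nC\cap A=I_n$ from Step 2, the original chain stabilizes, so $A=R/P$ is noetherian.

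The main obstacle is conceptual rather than computational. Without the finite generation of $\sqrt{PS}/PS$, Step 1 fails, as the Example shows. Without locality of $R\to S$, Step 2 can fail, since a flat map need not be faithfully flat. The remaining points are the nilpotence of a finitely generated nil ideal and the faithful flatness of local flat maps, both of which are standard.
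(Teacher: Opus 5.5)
Your proposal is correct and follows the paper's proof: first lift noetherianity from $S/\sqrt{PS}$ to $S/PS$ via the preceding Proposition, then descend it along the faithfully flat map $R/P\to S/PS$ (the paper cites a standard exercise in Matsumura for this descent step). You also make explicit two points the paper leaves tacit. One is that a finitely generated nil ideal is nilpotent, which is needed for the Proposition to apply. The other is that $R\to S$ must be a local homomorphism for $R/P\to S/PS$ to be faithfully flat.
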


\begin{proof}
By previous proposition $S/ {PS}$ is noetherian. Recall that $R/P\to S/ {PS}$ is faithfully flat. Now, we apply \cite[Ex. 7.9]{mat} to see $R/P$  is noetherian. 
\end{proof}

\begin{proposition}
	Let \(A\) be a ring and let \(N\subseteq A\) be a nilpotent ideal. Assume
	that \(A/N\) is coherent and that \(N\) is a coherent \(A\)-module. Then
	\(A\) is coherent.
\end{proposition}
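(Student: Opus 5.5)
The plan is to use the standard characterization of coherence: a ring $A$ is coherent iff every finitely generated ideal is finitely presented, equivalently iff the finitely presented $A$-modules form an abelian subcategory, i.e. the kernel of any map between finitely presented modules is finitely generated. We also need the standard 2-out-of-3 behaviour of finitely presented modules: in $0\to M'\to M\to M''\to 0$, if $M'$ is finitely generated and $M''$ is finitely presented then $M$ is finitely presented. If $M$ is finitely presented and $M'$ is finitely generated then $M''$ is finitely presented. And if $M$, $M''$ are finitely presented then $M'$ is finitely generated.

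We argue by induction on $t$ with $N^t=0$, using the filtration $A\supseteq N\supseteq N^2\supseteq\cdots\supseteq N^t=0$ as in the preceding Proposition. A cleaner route avoids induction. Let $I=(a_1,\dots,a_n)\subseteq A$ be finitely generated, and let $K$ be the kernel of $A^n\to I$. We must show $K$ is finitely generated. First note that $A/N$ is finitely presented over $A$, since $N$ is coherent and hence finitely generated. Because $A/N$ is coherent as a ring, and a module over $A/N$ that is finitely presented over $A/N$ is finitely presented over $A$ (as $A/N$ is finitely presented over $A$), we can transfer information across the quotient map.

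The key steps are as follows.

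\emph{Step 1.} The $A/N$-modules $N^i/N^{i+1}$ are finitely generated. We then want them finitely presented over $A$. Since $N$ is coherent over $A$, its finitely generated $A$-submodules $N^{i}$ (images of $N^{\otimes i}$, finitely generated since $N$ is) are finitely presented. Hence each $N^i/N^{i+1}$ is finitely presented over $A$.

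\emph{Step 2.} Show that every finitely generated $A$-submodule of $N$ is finitely presented. This is exactly coherence of $N$.

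\emph{Step 3.} For $I$ as above, consider $0\to I\cap N\to I\to \bar I\to 0$ with $\bar I=(I+N)/N\subseteq A/N$. Here $\bar I$ is a finitely generated ideal of the coherent ring $A/N$, so it is finitely presented over $A/N$ and hence over $A$. Therefore $I\cap N$, the kernel of the surjection $I\to\bar I$ from a finitely generated module onto a finitely presented one, is finitely generated. It is then a finitely generated submodule of the coherent module $N$, hence finitely presented by Step 2. By 2-out-of-3, $I$ is finitely presented. So $A$ is coherent.

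The main obstacle is the transfer in Step 3: showing that $\bar I$ is finitely presented over $A$. This needs $A/N$ to be finitely presented over $A$, i.e.\ $N$ finitely generated, which is supplied by the coherence of $N$. I would double-check that lemma, namely that finite presentation over $A/N$ implies finite presentation over $A$ when $A/N$ is finitely presented over $A$, by pulling back a presentation $(A/N)^m\to(A/N)^k\to M\to 0$ to $A^m\oplus N^{k}\to A^k\to M\to0$. Nilpotence of $N$ is in fact not essential to this argument.
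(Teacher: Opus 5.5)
Your argument is correct and is essentially the paper's proof written out in full. The paper applies the two-out-of-three property of coherent modules to $0\to N\to A\to A/N\to 0$. Your Step 3 is exactly the proof of that property, restricted to a finitely generated ideal $I$. It also supplies a check the paper leaves implicit: since $N$ is finitely generated, $A/N$ is finitely presented over $A$, so the coherent ring $A/N$ is a coherent $A$-module.

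One correction: your first listed ``2-out-of-3'' fact is false as stated. Take $M''=0$; one needs $M'$ finitely presented, not merely finitely generated. This does no harm, since you apply it only after showing $I\cap N$ is finitely presented. Step 1 and the induction remark are not needed.
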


\begin{proof}
	Consider the exact sequence
$
	0\to N\to A\to A/N
	\to 0.
	$ 	Since \(N\) is a coherent \(A\)-module and \(A/N\) is a coherent ring,
	the middle term \(A\) is a coherent \(A\)-module. Hence \(A\) is a
	coherent ring.
\end{proof}

\begin{proposition}
	Let \(R\to S\) be a finite faithfully flat homomorphism of
	rings, let \(P\in\operatorname{Spec}(R)\), and set
 $
	N=\frac{\sqrt{PS}}{PS}.
	$ 	Assume that \(S/\sqrt{PS}\) is coherent and that \(N\) is a coherent
	\(S/PS\)-module. Then \(R/P\) is coherent.
\end{proposition}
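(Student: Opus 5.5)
The plan has two steps: first lift coherence from $B=S/\sqrt{PS}$ to $C=S/PS$ using the preceding proposition on nilpotent ideals, then descend it from $C$ to $A=R/P$ along the finite faithfully flat map $A\to C$.

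\emph{Step 1.} Apply the previous proposition with $A:=C$ and $N:=\sqrt{PS}/PS$. To do so, we need $N$ to be a nilpotent ideal of $C$. Its elements are nilpotent by construction. Since $S$ is finite over $R$ and $R$ need not be noetherian, $N$ need not be finitely generated. However, the hypothesis that $N$ is a coherent $C$-module includes that $N$ is finitely generated. A finitely generated ideal whose generators are nilpotent is itself nilpotent. The hypotheses of the previous proposition then hold, since $C/N=B$ is coherent by assumption. We conclude that $C=S/PS$ is a coherent ring.

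\emph{Step 2.} Since $R\to S$ is finite and faithfully flat, base change shows that $A\to C$ is finite and faithfully flat. It remains to show that coherence descends along faithfully flat ring maps. Recall that $A$ is coherent iff every finitely generated ideal $I\subseteq A$ is finitely presented, or equivalently, iff the kernel of every map $A^n\to A$ is finitely generated.

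Take $f\colon A^n\to A$ with kernel $K$. By flatness, $K\otimes_A C=\ker(f\otimes C)$, and this is finitely generated because $C$ is coherent. Faithfully flat descent of finite generation then gives that $K$ is finitely generated. Concretely, write $K=\bigcup K_\lambda$ as the union of its finitely generated submodules. Then $K\otimes C=\bigcup K_\lambda\otimes C$, using flatness to identify each $K_\lambda\otimes C$ with a submodule. Being finitely generated, $K\otimes C$ equals some $K_\lambda\otimes C$. Hence $(K/K_\lambda)\otimes C=0$, and faithful flatness forces $K=K_\lambda$. Thus $A=R/P$ is coherent.

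The main obstacle is Step 1: checking that the previous proposition applies, specifically that $N$ is nilpotent and not merely nil. This is exactly where the coherence hypothesis on $N$, through its finite generation, is used. Step 2 is a standard descent argument and requires only flatness and faithfulness; the finiteness of $S$ over $R$ is not needed there.
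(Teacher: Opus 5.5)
Your proof is correct and follows the same route as the paper. First you lift coherence from $S/\sqrt{PS}$ to $S/PS$ via the preceding proposition on nilpotent ideals, then you descend along the faithfully flat map $R/P\to S/PS$, and you also supply two details the paper leaves implicit: that $N$ is genuinely nilpotent (it is finitely generated by nilpotents, thanks to the coherence hypothesis), and an explicit argument that coherence descends along faithfully flat maps.
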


\begin{proof}
	There is an exact sequence	
 $
	0\to
	\frac{\sqrt{PS}}{PS}
	\to
	\frac{S}{PS}
	\to
	\frac{S}{\sqrt{PS}}
	\to 0.
	$ By assumption,
$
	\frac{\sqrt{PS}}{PS}
	$ is a coherent \(S/PS\)-module, while
$
	S/\sqrt{PS}
	$ is a coherent ring. Hence, by the preceding proposition,
$
	S/PS
	$ is coherent.
	Since \(R\to S\) is faithfully flat, the induced map
$
	R/P\to S/PS
	$ 	is faithfully flat. Coherence descends along faithfully flat
	homomorphisms. Therefore \(R/P\) is coherent.
\end{proof}
The coherence is not invariant under nilpotent
thickenings $
A/N\text{ coherent}
\centernot\Longrightarrow
A\text{ coherent}:
$
\begin{example}
	Let \(\kk\) be a field and let \(V\) be an infinite-dimensional
	\(k\)-vector space. Consider the trivial extension
$
	A=\kk\ltimes V,
	$ with multiplication
$
	(a,v)(b,w)=(ab,aw+bv).
$
Set
$
	N=0\ltimes V.
	$ Then
 $
	N^2=0, A/N\cong k,
	$ so \(A/N\) is coherent, but \(A\) is not coherent.
	Moreover, \(N\) is not a coherent \(A\)-module.
\end{example}

\begin{proof}
	Clearly
$
	N^2=0
	$ and
$
	A/N\cong k.
	$ Thus \(A/N\) is a field, and in particular it is coherent.
 We claim that \(A\) is not coherent. Choose a nonzero element
	\(v\in V\), and let
 $
	I=Av.
	$
Since \(N^2=0\), multiplication by an element
	\((a,w)\in A\) on \(v=(0,v)\) is given by
$
	(a,w)(0,v)=(0,av).
$
Thus
$
	I\cong kv
$
as an \(A\)-module, and \(I\) is finitely generated.
Now consider the annihilator
$
	\operatorname{Ann}_A(v)
	=
	\{(a,w)\in A:av=0\}.
	$	Since \(V\) is a \(k\)-vector space and \(v\neq0\), this gives
$
	\operatorname{Ann}_A(v)=0\ltimes V=N.
	$	If \(A\) were coherent, the annihilator of every element would be a
	finitely generated ideal. Hence \(N\) would be finitely generated.
But \(N=0\ltimes V\) is finitely generated as an \(A\)-module if and
	only if \(V\) is finite-dimensional over \(k\), contrary to our
	assumption. Therefore \(A\) is not coherent.
	In particular, the nilpotent ideal \(N\) is not a coherent
	\(A\)-module.
\end{proof}

From now on all rings are assumed  to be noetherian and local, otherwise we specialized.
Over normal homomorphisms (see \cite[Definition 19.0.1]{HS} for a definition) things are so beautiful.
\begin{proposition}\label{n}
	Let \(R\to S\) be a normal homomorphism of rings. Then Problem \ref{1.1} holds for the following properties$$\mathcal P
	\in\{regularity; Cohen-Macaulay; complete-intersection; Gorenstein; normality,...\}.$$
\end{proposition}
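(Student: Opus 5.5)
The key point is that under a normal homomorphism the radicalization step is invisible: I will show $PS=\sqrt{PS}$, so that $B=C$, and then apply ordinary flat descent to $A\to C$. Recall that $R\to S$ is \emph{normal} if it is flat and for every $\mathfrak p\in\Spec R$ the fibre $S\otimes_R \kappa(\mathfrak p)$ is geometrically normal. In particular all fibres are reduced, and in fact geometrically reduced.

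\emph{First step: $S/PS$ is reduced.} The induced map $A=R/P\to C=S/PS$ is flat, being the base change of $R\to S$. Its fibres are the fibres of $R\to S$ over primes containing $P$, so they are geometrically normal, and hence $A\to C$ is again a normal homomorphism. Now $A$ is a domain, hence reduced. By the standard ascent result (Matsumura, Commutative Ring Theory, Cor.\ to Thm.~23.9, or \cite[Ch.~19]{HS}), if $A\to C$ is flat with reduced fibres and $A$ is reduced, then $C$ is reduced. This gives $\sqrt{PS}=PS$ and $B=C$.

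\emph{Second step: descent.} We now have a flat local homomorphism $A\to C=B$ of noetherian local rings, which is therefore faithfully flat. For each $\mathcal P$ in the list, the descent statement ``$C$ has $\mathcal P$ $\Rightarrow$ $A$ has $\mathcal P$'' holds:
\begin{itemize}
\item regularity, by Matsumura Thm.~23.7;
\item Cohen--Macaulayness and Gorensteinness, by Matsumura Thm.~23.3 and Thm.~23.4 (these even hold without any fibre hypothesis);
\item complete intersection, by Avramov's theorem on flat local maps;
\item normality, by Matsumura Cor.\ to Thm.~23.9: $R_k$ and $S_k$ descend along faithfully flat maps, and so does being a domain or reduced.
\end{itemize}
For the ``...'' in the statement, I would note that the same argument applies to any $\mathcal P$ that descends along flat local maps. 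If $S$ is not local, I would localize at a prime of $S$ lying over the maximal ideal of $R$ and reduce to the local case.

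\emph{Main obstacle.} The only step needing care is the first one: I have to check that the fibre hypothesis passes to $R/P\to S/PS$ (which it does, since the fibres coincide) and that reducedness ascends along a flat map with reduced fibres. For the latter I would cite Matsumura's theorem rather than reprove it. If needed, the proof runs as follows: for a flat map $A\to C$ one has $\operatorname{Ass}C=\bigcup_{\mathfrak p\in\operatorname{Ass}A}\operatorname{Ass}(C/\mathfrak pC)$, which yields $(S_1)$ for $C$. Since the fibres are reduced, $C$ also satisfies $(R_0)$, and hence $C$ is reduced.
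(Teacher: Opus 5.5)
Your proposal is correct and is essentially the paper's proof: you base change the normal homomorphism to $R/P\to S/PS$, use that $S/PS$ is flat with reduced fibres over the domain $R/P$ and hence reduced (so $\sqrt{PS}=PS$ and $B=C$), and then apply faithfully flat descent of each property along $A\to C$. Your additional references (Avramov for complete intersections, Matsumura's Corollary to Theorem~23.9 for normality) and your sketch of why reducedness ascends only fill in details that the paper leaves to its cited sources.
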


\begin{proof} Let
	\(P\in\operatorname{Spec}(R)\). Set
	$A=R/P$ and $C=S/PS.$ By \cite[Proposition 19.1.2]{HS} $A\to C$ is normal. Since $A$ is an integral domain ($P$ is prime) and
	$A\to C$ is normal, by \cite[Proposition 19.1.3~(1)]{HS} we deduce that $C$ is reduced. Then it follows that
$
	S/PS=S/\sqrt{PS}=:B.
 $
	Thus \(A\to B\) is  faithfully flat.
	This gives all desired properties. For instance, \cite[Theorem 2.2.12]{BH} (resp. \cite[Theorem  2.1.7]{BH}; and  \cite[Corollary 3.3.15]{BH}) shows regularity (resp. Cohen-Macaulay; and Gorenstein) of $A$ descent from the regularity (resp. Cohen-Macaulay; and Gorenstein) of $B$.
\end{proof}

In the previous item, primeness of $P$ was so essential.
In the next item,  we deal with ideals that  are not assumed to be prime.
Clearly the unmixed property is not the case, as there are several well-known example of ideals as $I\neq I^{unmixed}$ but $\rad(I)$ is equal to its unmixed part. 
\begin{proposition}
	The assumption  $S/\sqrt{IS}\text{ is equidimensional overits  minimal primes}
	$ implies that
	$R/I$ is equidimensional over $\min(I)$.
\end{proposition}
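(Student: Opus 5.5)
We work in the standing setting: \(R\to S\) is a flat local homomorphism of noetherian local rings. Since it is local, it is faithfully flat, and hence so is \(R/I\to S/IS\). The first point is that \(S/\sqrt{IS}\) and \(S/IS\) have the same minimal primes and the same dimensions over each of them, since passing to the reduction does not change \(\Spec\). So the hypothesis says exactly that \(\dim S/\mathfrak{Q}=\dim S/IS\) for every \(\mathfrak{Q}\in\min(IS)\). The task is therefore to descend equidimensionality of \(C=S/IS\) to \(A=R/I\) along the faithfully flat map \(A\to C\).

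\textbf{Step 1: lift each minimal prime of \(I\).} Let \(\mathfrak p\in\min(I)\). By faithful flatness there is a prime of \(S\) contracting to \(\mathfrak p\). Choose \(\mathfrak Q\) minimal over \(\mathfrak pS\). By going-down for the flat map \(R\to S\), we get \(\mathfrak Q\cap R=\mathfrak p\). We also need \(\mathfrak Q\) to be minimal over \(IS\). Any prime \(\mathfrak Q'\subseteq\mathfrak Q\) containing \(IS\) contracts to a prime containing \(I\) and contained in \(\mathfrak p\), hence contracting to \(\mathfrak p\) by minimality of \(\mathfrak p\). Thus \(\mathfrak Q'\supseteq\mathfrak pS\), and so \(\mathfrak Q'=\mathfrak Q\). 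Hence \(\mathfrak Q\in\min(IS)\).

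\textbf{Step 2: compare dimensions.} Apply the dimension formula for flat local maps to \(R/\mathfrak p\to S/\mathfrak pS\):
\[
\dim S/\mathfrak pS=\dim R/\mathfrak p+\dim S/\mathfrak m S .
\]
We want \(\dim S/\mathfrak Q\) computed from \(\mathfrak p\) alone. Here we need that every minimal prime of \(\mathfrak pS\) has the same dimension, or at least that we may choose \(\mathfrak Q\) with \(\dim S/\mathfrak Q=\dim S/\mathfrak pS\). The latter is always possible by picking \(\mathfrak Q\) among the minimal primes of \(\mathfrak pS\) realizing the dimension. Step 1 applies to any minimal prime of \(\mathfrak pS\), so this choice is allowed.

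With that choice, the hypothesis gives
\[
\dim R/\mathfrak p+\dim S/\mathfrak mS=\dim S/\mathfrak Q=\dim S/IS .
\]
The same formula applied to \(R/I\to S/IS\) gives
\[
\dim S/IS=\dim R/I+\dim S/\mathfrak mS .
\]
Subtracting the fibre dimension yields \(\dim R/\mathfrak p=\dim R/I\) for every \(\mathfrak p\in\min(I)\). This is the desired equidimensionality.

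\textbf{Main obstacle.} The delicate part is justifying the dimension formula and the selection of \(\mathfrak Q\) in Step 2. For the selection, we need \(\dim S/\mathfrak pS=\max_{\mathfrak Q}\dim S/\mathfrak Q\) over \(\min(\mathfrak pS)\), which is true by definition of Krull dimension. The dimension formula itself needs the local hypothesis on \(R\to S\), which is part of the standing assumption (noetherian and local). The closed fibres of \(R/\mathfrak p\to S/\mathfrak pS\) and of \(R/I\to S/IS\) are both \(S/\mathfrak mS\), so the same fibre dimension cancels in both equations.

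If the map were not local, we would instead localize at a maximal ideal of \(S\) lying over \(\mathfrak m\). That costs some care about whether \(\dim S/IS\) is computed there, and it is the point I would double-check.
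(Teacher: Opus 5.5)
Your proof is correct, but it takes a different route from the paper's.

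The paper uses that $R\to S$ is finite. It lifts $\mathfrak q\in\min(I)$ to some $Q\in V(IS)$ by lying over for integral extensions. It shows $Q\in\min(IS)$ by incomparability. It then compares dimensions using that integral extensions preserve dimension, so that $\dim S/Q=\dim R/\mathfrak q$ and $\dim S/IS=\dim R/I$. No fibre term ever appears.

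You replace lying over and incomparability by going-down for the flat map. This shows every prime minimal over $\mathfrak pS$ contracts to $\mathfrak p$ and lies in $\min(IS)$. You replace invariance of dimension under integral extensions by the flat local dimension formula $\dim S/\mathfrak aS=\dim R/\mathfrak a+\dim S/\mathfrak mS$ for $\mathfrak a=\mathfrak p$ and $\mathfrak a=I$. The common closed fibre $S/\mathfrak mS$ then cancels.

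What your approach buys is generality. It needs no finiteness, so it proves the statement for any flat local homomorphism of noetherian local rings. There, minimal primes of $\mathfrak pS$ need not all have the same dimension. It also gives the equivalence $\dim R/\mathfrak p=\dim R/I$ iff $\dim S/\mathfrak pS=\dim S/IS$.

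The worry about choosing $\mathfrak Q$ in your Step 2 is not actually an obstacle. Since Step 1 puts all of $\min(\mathfrak pS)$ inside $\min(IS)$, the hypothesis already forces every one of them to have dimension $\dim S/IS$.

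What your approach costs is locality of the map, which the paper's integral-extension argument does not use. In the paper's standing setting of a finite flat map of noetherian local rings, such a map is automatically local, so both arguments apply.
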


\begin{proof}
	Since
	$
	\operatorname{Spec}(S/IS)
	\cong
	\operatorname{Spec}(S/\sqrt{IS}),
	$ the rings \(S/ IS\) and \(S/\sqrt{IS}\) have the same prime spectrum.
	In particular, they have the same irreducible components and the same
	Krull dimension. Hence
	
	$$(\ast):\quad
	S/\sqrt{IS}\text{ is equidimensional  over its min}
	\quad\Longrightarrow\quad
	S/IS\text{ is equidimensional over its min}.
	$$
Let $\frak q\in V(I)$ be minimal. By lying over for integral extensions (see \cite[Theorem 9.3]{mat}), there is $Q\in V(IS)$ so that $Q\cap R=\frak q$. Let $Q'$ be a prime so that $IS\subseteq Q' \subseteq Q$. By faithfully flatness $I=IS\cap R= Q'	\cap R\subseteq Q	\cap R=\frak q$. By minimality of $\frak q$, $Q'	\cap R= Q	\cap R=\frak q$. By incomparability for lying over (see \cite[theorem 9.3.(ii)]{mat}) we are able to deduce that $Q=Q'$. So, $Q\in \min(IS)$. Thus, $\dim(S/Q)=\dim(S/IS)$ by  $(\ast)$. But
  \( R/I\to  S/IS\) is finite faithfully flat. Since the fibers of a
	finite morphism have dimension zero,  we have $$\dim(S/Q)=\dim(S/IS)=\dim(R/I)\geq \dim(R/\frak q)=\dim(S/\frak q S)= \dim(S/Q),$$ where the last two equalities follow because
  \(R/\frak q\to  S/\frak q S\) is also finite faithfully flat, $Q\in \min(\frak q S)$ and by $(\ast)$.	
	Therefore $\dim(R/I)= \dim(R/\frak q)$
	and so \( R/I\) is
	equidimensional over its minimal prime ideals.
\end{proof}

 	Let \(R\) be a ring of characteristic \(p>0\) equipped with Frobenius map $F$. The \emph{perfect closure} of \(R\)
is the colimit
\(
R^{\mathrm{perf}}
:=R^\infty:=
\varinjlim_{e\in\mathbb N} F_*^e R,
\)
where the transition maps \(F_*^e R\to F_*^{e+1}R\) are induced by the Frobenius
\(F\colon R\to R\).
Recall from \cite{Shi10} that $A$ is called \emph{$F$-coherent} if \(A^{\mathrm{perf}}\) is coherent.
The following observation is useful in the presence of nilpotents.

\begin{lemma}
	Let \(A\) be a ring of characteristic \(p>0\). Then
	$A^{\mathrm{perf}} \cong (A_{\mathrm{red}})^{\mathrm{perf}}.
	$
	Consequently,
	\[
	A\text{ is $F$-coherent}
	\quad\Longleftrightarrow\quad
	A_{\mathrm{red}}\text{ is $F$-coherent}.
	\]
\end{lemma}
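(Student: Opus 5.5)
The plan is to show that the quotient map $\pi\colon A\to A_{\mathrm{red}}$ induces an isomorphism on perfect closures. The consequence about $F$-coherence then follows at once from the definition recalled above from \cite{Shi10}: $A$ is $F$-coherent exactly when $A^{\mathrm{perf}}$ is coherent, and isomorphic rings are simultaneously coherent or not.

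First, I would realize the perfect closure concretely. Since $F_*^eA$ is just $A$ as a ring, the colimit $A^{\mathrm{perf}}=\varinjlim(A\xrightarrow{F}A\xrightarrow{F}\cdots)$ can be written as classes $[a,e]$ with $a\in A$ and $e\in\mathbb N$ (thought of as $a^{1/p^e}$). Two classes satisfy $[a,e]=[b,e']$ iff $a^{p^{N-e}}=b^{p^{N-e'}}$ for some large $N$. The map $\pi$ commutes with Frobenius, so it induces a ring map $\pi^{\mathrm{perf}}\colon A^{\mathrm{perf}}\to (A_{\mathrm{red}})^{\mathrm{perf}}$, $[a,e]\mapsto[\bar a,e]$. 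This map is surjective, because every class $[\bar a,e]$ lifts to $[a,e]$.

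The key step is injectivity, and it is the only place where the nilradical enters. Suppose $[\bar a,e]=0$ in $(A_{\mathrm{red}})^{\mathrm{perf}}$. Then $\bar a^{p^m}=0$ in $A_{\mathrm{red}}$ for some $m$, so $a^{p^m}\in\nil(A)$. Hence $a$ is nilpotent, say $a^n=0$. Choosing $k$ with $p^k\ge n$ gives $a^{p^k}=0$, so $[a,e]=[a^{p^k},e+k]=0$ in $A^{\mathrm{perf}}$. No finiteness hypothesis on $\nil(A)$ is needed, because nilpotence is checked elementwise.

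I do not expect a real obstacle. The one point needing care is to fix the colimit description and the equality criterion in the direct limit correctly, so that "killed by some Frobenius power" is exactly the kernel condition. A more conceptual alternative is to note that $A\to A_{\mathrm{red}}$ is a universal homeomorphism with nilpotent kernel. Any map to a perfect (hence reduced) ring factors uniquely through $A_{\mathrm{red}}$, so both rings have the same universal perfect ring. I would include the elementwise argument since it is self-contained.
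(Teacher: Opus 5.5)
Your proposal is correct and follows the same route as the paper. The canonical map $A^{\mathrm{perf}}\to(A_{\mathrm{red}})^{\mathrm{perf}}$ is an isomorphism because every element of $\nil(A)$ is killed by some Frobenius power and so vanishes in the colimit. Your explicit description of the colimit, the surjectivity remark, and the elementwise injectivity check just spell out what the paper compresses into the sentence that the nilradical is killed in the direct limit.
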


\begin{proof}
	Let \(N=\sqrt{(0)}\) be the nilradical of \(A\). Since every element of
	\(N\) is nilpotent, for each \(x\in N\) there exists \(e\geq 0\) such that
	\(x^{p^e}=0\). Thus \(N\) is killed in the direct limit defining the
	perfect closure. Hence the canonical map
	\(A^\infty\to (A/N)^{\mathrm{perf}}\)
	is an isomorphism. Since \(A/N=A_{\mathrm{red}}\), the assertion follows.
\end{proof}

\begin{corollary}
	Let \(R\to S\) be a ring homomorphism of characteristic $(p>0)$, and let
	\(P\in\operatorname{Spec}(R)\). Then
	$
	(S/PS)^{\mathrm{perf}}
	\cong
	\bigl(S/\sqrt{PS}\bigr)^{\mathrm{perf}}.
	$
	In particular,
	\[
	S/PS\text{ is $F$-coherent}
	\quad\Longleftrightarrow\quad
	S/\sqrt{PS}\text{ is $F$-coherent}.
	\]
\end{corollary}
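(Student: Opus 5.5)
This is a direct application of the preceding lemma with $A:=S/PS$, which has characteristic $p$ because $S$ does. The plan is to identify the reduction of $S/PS$ with $S/\sqrt{PS}$ and then read off both assertions.

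\textbf{Step 1: the reduction of $S/PS$.} The nilradical of $S/PS$ is the set of classes $\bar s$ with $s^n\in PS$ for some $n$. That is exactly the ideal $\sqrt{PS}/PS$. Hence
\[
(S/PS)_{\mathrm{red}}=\frac{S/PS}{\sqrt{PS}/PS}\cong S/\sqrt{PS},
\]
by the third isomorphism theorem.

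\textbf{Step 2: the perfect closures.} The preceding lemma gives $(S/PS)^{\mathrm{perf}}\cong\bigl((S/PS)_{\mathrm{red}}\bigr)^{\mathrm{perf}}$. Combined with Step 1, this yields $(S/PS)^{\mathrm{perf}}\cong (S/\sqrt{PS})^{\mathrm{perf}}$. The isomorphism is the canonical one induced by the surjection $S/PS\to S/\sqrt{PS}$, since the lemma's isomorphism is induced by $A\to A_{\mathrm{red}}$.

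\textbf{Step 3: $F$-coherence.} By definition, a ring is $F$-coherent exactly when its perfect closure is coherent. Coherence is invariant under ring isomorphism, so the isomorphism of Step 2 gives the equivalence: $S/PS$ is $F$-coherent if and only if $S/\sqrt{PS}$ is $F$-coherent. Equivalently, one can quote the "consequently" clause of the lemma directly.

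No step poses a real obstacle; the only point to check is the identification of the nilradical in Step 1. Primeness of $P$ and any hypothesis on the map $R\to S$ (flatness, finiteness) play no role. The statement holds for any ideal of $R$, indeed for any ideal of $S$.
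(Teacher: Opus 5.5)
Your proposal is correct and follows the paper's proof: you identify the nilradical of $S/PS$ as $\sqrt{PS}/PS$, deduce $(S/PS)_{\mathrm{red}}\cong S/\sqrt{PS}$, and apply the preceding lemma. Your remark that neither the primeness of $P$ nor any hypothesis on $R\to S$ is used is also accurate.
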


\begin{proof}
	The nilradical of \(S/PS\) is
	$
	\sqrt{PS}/PS.
	$
	Hence
	$
	(S/PS)_{\mathrm{red}}
	\cong S/\sqrt{PS}.
	$
	The assertion now follows from the previous lemma.
\end{proof}

Let us present a property weaker than F-coherent rings. Recall  that $R$ is called \emph{weakly F-nilpotent}  if  every element of $H^i_\fm(R)$ is annihilated by some   $F^n$ for
$i < \dim(R)$  (see \cite{A} for their differences and some comparisons). 
\begin{proposition}\label{212}
	Let \(R\to S\) be a finite  flat homomorphism of
	local rings of characteristic \(p>0\), let
	\(P\in\operatorname{Spec}(R)\).
 If \(S/\sqrt{PS}\) is weakly
	\(F\)-nilpotent, then \(R/P\) is weakly \(F\)-nilpotent.
\end{proposition}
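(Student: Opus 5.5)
We argue in two stages along the diagram $A=R/P\to C=S/PS\to B=S/\sqrt{PS}$: first pass from $B$ up to $C$ across the nilpotent thickening, then descend from $C$ to $A$ along the finite flat map. Throughout we use that $R\to S$ is a finite flat local homomorphism. Hence $A\to C$ is finite and faithfully flat, $C$ is local with maximal ideal $\fm_C=\sqrt{\fm_A C}$, and $\dim A=\dim C=\dim B$, because finite flat maps have zero-dimensional closed fibre and $B$, $C$ have the same spectrum.

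\textbf{Step 1: from $B$ to $C$.} Let $N=\sqrt{PS}/PS$ be the nilradical of $C$. Choose $e_0$ with $N^{[p^{e_0}]}=0$; this is possible because $N$ is finitely generated, $C$ being noetherian. Then the Frobenius power $F^{e_0}\colon C\to C$ kills $N$, so it factors as $C\to B\xrightarrow{\ \psi\ } C$, where $\psi(\bar c)=c^{p^{e_0}}$. Both maps are ring maps, and the supports agree under $\fm_B=\fm_C B$ and $\psi(\fm_B)C\subseteq \fm_C$ with the same radical. So on local cohomology we get
\[
F^{e_0}\colon H^i_{\fm_C}(C)\xrightarrow{\ \pi\ } H^i_{\fm_B}(B)\xrightarrow{\ \psi_*\ } H^i_{\fm_C}(C),
\]
and $\psi_*$ is compatible with Frobenius on $B$ and $C$. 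Now let $\eta\in H^i_{\fm_C}(C)$ with $i<\dim C=\dim B$. By hypothesis $F^n\pi(\eta)=0$ for some $n$. Then $F^{n+e_0}\eta=\psi_*(F^n\pi(\eta))=0$, so $C$ is weakly $F$-nilpotent. (Alternatively, use the filtration argument of the first Proposition together with the exact sequence $0\to N\to C\to B\to0$, but the factorization is cleaner.)

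\textbf{Step 2: from $C$ to $A$.} Since $A\to C$ is flat and $\fm_A C$ is $\fm_C$-primary, flat base change gives
\[
H^i_{\fm_A}(A)\otimes_A C\cong H^i_{\fm_A C}(C)=H^i_{\fm_C}(C),
\]
and this isomorphism is Frobenius-equivariant, with $F(\eta\otimes c)=F\eta\otimes c^p$. Faithful flatness makes $H^i_{\fm_A}(A)\to H^i_{\fm_A}(A)\otimes_A C$, $\eta\mapsto\eta\otimes1$, injective. This map commutes with Frobenius, since it is induced by the ring map on Čech complexes. So if $i<\dim A=\dim C$ and $\eta\in H^i_{\fm_A}(A)$, Step 1 gives $F^n(\eta\otimes1)=0$, hence $F^n\eta\otimes 1=0$, hence $F^n\eta=0$. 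Thus $R/P$ is weakly $F$-nilpotent.

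The main obstacle is Step 1: checking that the factorization $F^{e_0}=\psi\circ\pi$ really induces Frobenius-compatible maps on local cohomology with the correct supports. I would verify this on the Čech complex for a system of parameters $x_1,\dots,x_d$ of $C$. Under $\pi$ these map to a system of parameters of $B$, and under $\psi$ that system goes to $x_j^{p^{e_0}}$, which generate an ideal with the same radical. The remaining points are routine: the dimension equalities and the injectivity in Step 2.
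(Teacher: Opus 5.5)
Your proof is correct and follows the paper's route: you first pass from $B$ to $C=S/PS$ across the nilpotent thickening, then descend from $C$ to $A=R/P$ using flat base change $H^i_{\fm_A}(A)\otimes_A C\cong H^i_{\fm_C}(C)$ and the injectivity of $\eta\mapsto\eta\otimes 1$ from faithful flatness, exactly as the paper does. The only difference is in Step 1: the paper uses the Frobenius-compatible long exact sequence of $0\to N\to C\to B\to 0$ together with nilpotence of Frobenius on $H^i_{\fm_C}(N)$, whereas your factorization $F^{e_0}=\psi\circ\pi$ through $B$ reaches the same conclusion slightly more cleanly, without working with the $p$-linear action on the module $N$.
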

The proposition  follows easily 
theorem from \cite[Proposition 12.22]{MP}, 
 namely a characterization of weakly \(F\)-nilpotent in terms of big Cohen-Macaulayness of its perfect closure. 
 
\begin{proof}
 Put
$
	N=\sqrt{PS}/PS\subseteq C.
	$ Then \(N\) is the nilradical of \(C\), and
$
	C/N\cong B.
$
There exists \(e\geq 1\) such that
$
	N^{p^e}=0.
	$	In particular, the Frobenius action on \(N\) is nilpotent. Hence the
	Frobenius action on each local cohomology module
	\(H^i_{\mathfrak m_C}(N)\) is nilpotent.
	From the exact sequence
$
	0\to N\to C\to B
	\to0
	$ we obtain the Frobenius-compatible exact sequence
 $
	\cdots\to
	H^i_{\mathfrak m_C}(N)
\to
	H^i_{\mathfrak m_C}(C)
\to
	H^i_{\mathfrak m_B}(B)
\to
	H^{i+1}_{\mathfrak m_C}(N)
\to\cdots .
	$ 
	Since the Frobenius action on the two terms involving \(N\) is
	nilpotent, and \(B\) is weakly \(F\)-nilpotent, it follows that
	every element of
$
	H^i_{\mathfrak m_C}(C)$ with $ i<\dim C,
$
	is killed by some power of Frobenius. Thus \(C\) is weakly
	\(F\)-nilpotent.
Now \(A\to C\) is faithfully flat. Since \(A\to C\) is finite,
	we have
$
	\dim A=\dim C.
$	Moreover, local cohomology commutes with flat base change, so
$
	H^i_{\mathfrak m_A}(A)\otimes_A C
	\cong
	H^i_{\mathfrak m_C}(C)
	$
for every \(i\). Let
$
	\eta\in H^i_{\mathfrak m_A}(A)$, $i<\dim A.
	$ Since \(C\) is weakly \(F\)-nilpotent, the element
	\(\eta\otimes1\) is annihilated by some power \(F^n\). Hence
$
	F^n(\eta)\otimes1=0.
$
Because \(A\to C\) is faithfully flat, the map
$
	H^i_{\mathfrak m_A}(A)
	\to
	H^i_{\mathfrak m_A}(A)\otimes_A C
	$ is injective. Therefore
$
	F^n(\eta)=0.
	$ Thus every element of \(H^i_{\mathfrak m_A}(A)\), for
	\(i<\dim A\), is annihilated by some power of Frobenius. Hence
	\(A=R/P\) is weakly \(F\)-nilpotent.
\end{proof}

Let us present third proof the above item: \begin{remark}By \cite[Corollary 5.9]{A} weakly \(F\)-nilpotent corresponds to cohomological complete-intersection. It remain to note that
we have flat base change for local cohomology modules and also $H^i_{I}(-)=H^i_{\sqrt{I}}(-)$. Both are basic and well-known to every one.\end{remark}
 
For definition of \(F\)-nilpotent, see \cite[\S 12]{MP}.
Here, we record some basic facts from \cite{MP}.

\begin{remark} Let \(R\) be a reduced excellent local ring of characteristic \(p>0\). Let
	\(
	\prod_{\mathbb N}R^{\mathrm{perf}}
	\)
	be the product of countably many copies of \(R^{\mathrm{perf}}\), indexed by
	\(\mathbb N\), equipped with the diagonal Frobenius action induced componentwise
	by \(F\) on \(R^{\mathrm{perf}}\). Let
	\(
	W\subseteq \prod_{\mathbb N}R^{\mathrm{perf}}
	\)
	be the multiplicative set generated by
	\(
	(c,F_*c,F_*^2c,\dots)
	\)
	for all \(c\in R\) not contained in any minimal prime of \(R\), embedded
	diagonally into \(\prod_{\mathbb N}R^{\mathrm{perf}}\). The
	\emph{ algebra associated to \(R\)} is
	\( 
	\mathcal B(R)
	:=
	W^{-1}\prod_{\mathbb N}R^{\mathrm{perf}}.
	\)
When the ring is a homomorphic image of a Cohen-Macaulay ring
and in view of \cite[Theorem 12.15]{MP} we have

	\[
R \text{ is equidimensional} \Longleftrightarrow \mathcal B(R)\text{  is balanced big Cohen--Macaulay.}
\]
\end{remark}

\begin{proposition} 
	\label{thm:Fnilpotent-descent}
	Let \((R,\mathfrak m)\to(S,\mathfrak n)\) be a finite weakly etale local homomorphism of
	excellent local rings of characteristic \(p>0\).
	Assume that \(F\)-nilpotence descends along weakly etale local homomorphisms,
	i.e.\ whenever \((A,\mathfrak a)\to(B,\mathfrak b)\) is a weakly etale local
	homomorphism of excellent local rings of characteristic \(p>0\) with
 and \(B\) is \(F\)-nilpotent, then \(A\) is
	\(F\)-nilpotent.
\end{proposition}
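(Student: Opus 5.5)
The statement as displayed only records the descent hypothesis, so I read the intended conclusion as: for every \(P\in\operatorname{Spec}(R)\), if \(B=S/\sqrt{PS}\) is \(F\)-nilpotent, then \(A=R/P\) is \(F\)-nilpotent. The plan follows the factorization \(A\to C\to B\) of the introduction, as in Proposition \ref{212}. First pass from \(B\) to \(C=S/PS\) through the nilpotent thickening. Then descend from \(C\) to \(A\) using the assumed descent of \(F\)-nilpotence along weakly étale local maps.

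\emph{Step 1 (the base change is again admissible).} Weakly étale maps are stable under base change, so \(A=R/P\to S/PS=C\) is weakly étale. It is finite, and it is local because \(S\) is local with \(\mathfrak m S\subseteq\mathfrak n\). Both \(A\) and \(C\) are excellent, being quotients of excellent rings. So \(A\to C\) satisfies the hypotheses of the assumed descent statement. Once \(C\) is known to be \(F\)-nilpotent, that assumption gives that \(A\) is \(F\)-nilpotent.

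\emph{Step 2 (from \(B\) to \(C\)).} This is the main obstacle, since \(F\)-nilpotence is not a priori stable under nilpotent thickenings. By the Lemma and Corollary on perfect closures, \(C^{\mathrm{perf}}\cong(C_{\mathrm{red}})^{\mathrm{perf}}=B^{\mathrm{perf}}\). Also \(\operatorname{Spec} C=\operatorname{Spec} B\), so \(C\) and \(B\) have the same minimal primes, the same dimension, and the same equidimensionality. I would first try the characterization of \(F\)-nilpotence in \cite[\S 12]{MP} in terms of the perfect closure and the balanced big Cohen--Macaulay algebra \(\mathcal B(-)\). The set \(W\) is defined from elements avoiding minimal primes, and these correspond under \(C\to B\). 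So \(\mathcal B(C)=\mathcal B(B)\), hence the characterizing condition holds for \(C\) exactly when it holds for \(B\).

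If that characterization is stated only for reduced rings, I would argue directly with local cohomology, as in Proposition \ref{212}. Use the Frobenius-compatible long exact sequence coming from \(0\to N\to C\to B\to 0\), where \(N=\sqrt{PS}/PS\) satisfies \(N^{[p^e]}=0\) for some \(e\). Frobenius acts nilpotently on every \(H^i_{\mathfrak m_C}(N)\). The Frobenius-closure of zero and its nilpotence then transfer between \(H^i(C)\) and \(H^i(B)\), for \(i<\dim\) and for the top degree.

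\emph{Step 3.} Combining Steps 1 and 2, \(C\) is \(F\)-nilpotent, and the descent assumption applied to \(A\to C\) yields that \(R/P\) is \(F\)-nilpotent.

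The delicate point is the top local cohomology in Step 2. There the \(F\)-nilpotence condition concerns \(0^F\) rather than all of \(H^d\), so I would need to check that the connecting map \(H^{d}(N)\to H^d(C)\to H^d(B)\to 0\) carries Frobenius-closures onto one another. Right-exactness in the top degree together with nilpotence on \(H^d(N)\) should make this work.
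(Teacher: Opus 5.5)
Your proof is correct for the statement as written, and it uses the same factorization $R/P\to S/PS\to S/\sqrt{PS}$ as the paper. Base change makes $A\to C$ weakly \'etale, and $F$-nilpotence passes from $B$ to $C$ because Frobenius acts nilpotently on $H^i_{\mathfrak m_C}(N)$. The paper only says ``see the method of Proposition~\ref{212}'' here. In top degree, what has to be checked is that $0^*$ in $H^d_{\mathfrak m_C}(C)$ maps into $0^*=0^F$ in $H^d_{\mathfrak m_B}(B)$, not that Frobenius closures correspond. This uses your observation that elements avoiding minimal primes correspond under $C\to B$, and nilpotence on $H^d_{\mathfrak m_C}(N)$ then finishes.

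The real difference is the last step. You apply the descent hypothesis recorded in the statement to $A\to C$. The paper never uses that hypothesis. Instead it proves descent along the finite weakly \'etale map $A\to C$ directly from the characterization \cite[Proposition 12.23]{MP}:
\begin{enumerate}
\item it uses $C^{\mathrm{perf}}\cong A^{\mathrm{perf}}\otimes_A C$;
\item faithfully flat descent of regular sequences then makes $A^{\mathrm{perf}}$ balanced big Cohen--Macaulay;
\item the comparison map $\mathcal B(A)\otimes_A C\to\mathcal B(C)$ gives injectivity of $H^d_{\mathfrak a}(A^{\mathrm{perf}})\to H^d_{\mathfrak a}(\mathcal B(A))$.
\end{enumerate}
Your route is shorter and airtight given the hypothesis. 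The paper's route is what would let one drop the hypothesis, which is the result the introduction announces.

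The step you flag as the main obstacle is in fact vacuous in this setting. Let $I$ be the kernel of $S\otimes_R S\to S$. Flatness of $S$ over $S\otimes_R S$ gives $I/I^2\cong\operatorname{Tor}_1^{S\otimes_R S}(S,S)=0$, so $\Omega_{S/R}=0$. Hence the finite weakly \'etale map $R\to S$ is finite \'etale. Then $R/P\to S/PS$ is \'etale over a domain, so $S/PS$ is already reduced and $C=B$ (compare Proposition~\ref{n}).
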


\begin{proof}
	Since \(R\to S\) is weakly etale and
	\(
	S\otimes_R R/P\cong S/PS,
	\)
	the base change
	\(
	R/P\to S/PS
	\)
	is weakly etale. Indeed, weakly etale morphisms are stable under arbitrary base
	change. Thus
	\(
	A=R/P\to C=S/PS
	\)
	is a weakly etale local homomorphism 
	so the local hypothesis is preserved.
	Let
	\(
	I:=\sqrt{PS}/PS\subseteq C.
	\)
	Then \(I\) is a nilpotent ideal of \(C\), and
	\(
	C/I\cong S/\sqrt{PS}=B.
	\)
	Since \(B\) is \(F\)-nilpotent by hypothesis, and \(F\)-nilpotence is insensitive
	to nilpotent thickenings (see the method of \ref{212}), it follows that
	\(
	C=S/PS
	\)
	is \(F\)-nilpotent.
	We now have a weakly etale local homomorphism
	\(
	A=R/P\to C=S/PS
	\)
and \(C\) is
	\(F\)-nilpotent. 
Recall that \(C\) commutes with products in
	the category of \(A\)-modules:
	\(
	C\otimes_A \prod_{\mathbb N}A^{\mathrm{perf}}
	\cong
	\prod_{\mathbb N}\bigl(C\otimes_A A^{\mathrm{perf}}\bigr).
	\) because it is finitely presented.
Moreover, as \(A\to C\) is weakly etale, then Frobenius is compatible with base
	change:
	\(
	C\otimes_A F_*^e A
	\cong
	F_*^e C,
	\)
	so that
	\(
	C^{\mathrm{perf}}
	\cong
	A^{\mathrm{perf}}\otimes_A C,
	\)
	and hence
	\(
	C\otimes_A \prod_{\mathbb N}A^{\mathrm{perf}}
	\cong
	\prod_{\mathbb N}C^{\mathrm{perf}}.
	\)
	Since \(A\) is a domain, the elements defining \(W_A\) are the Frobenius
	orbits of nonzero elements of \(A\). Let \(0\neq a\in A\). The exact sequence
	\(
	0\to A\xrightarrow{a}A
	\)
	remains exact after tensoring with the flat \(A\)-module \(C\), hence
	multiplication by \(a\) is injective on \(C\). Therefore \(a\) remains a
	non-zero-divisor in \(C\).
	By weak etaleness, Frobenius commutes with base change, and hence the image
	of
	\(
	(a,F_*a,F_*^2a,\ldots)
	\)
	is exactly the Frobenius orbit of the image of \(a\) in \(C\). Thus every
	generator of \(W_A\) maps into \(W_C\), giving 
	the image of \(W_A\) in
	\(\prod_{\mathbb N}C^{\mathrm{perf}}\)
	is contained in \(W_C\). This gives the canonical map
	\(
	W_A^{-1}\prod_{\mathbb N}C^{\mathrm{perf}}
	\to
	W_C^{-1}\prod_{\mathbb N}C^{\mathrm{perf}}
	\) fits in the following diagram
	$$\xymatrix{
		&&  W_A^{-1}\prod_{\mathbb N}C^{\mathrm{perf}} \ar[r]^{}\ar[d]_{=}&W_C^{-1}\prod_{\mathbb N}C^{\mathrm{perf}}
		\ar[d]^{{=}}\\
		&& B(A)\otimes_A C\ar[r]^{}& B(C) 
		&&&}$$
and so the following composition$$(\ast):\quad\xymatrix{
	&B(A)\ar[r]^{}\ar[d]_{\exists}&B(A)\otimes_A C\ar[dl]^{}\\
	&  B(C) 
	&&&}$$

Indeed, let \(d=\dim A=\dim C \). Since \(C\) is \(F\)-nilpotent,   \cite[Proposition 12.23]{MP} gives us:
	\begin{enumerate}
		\item \(C^{\mathrm{perf}}\) is a balanced big Cohen--Macaulay algebra;
		\item the map
		\(
		H^d_{\mathfrak c}(C^{\mathrm{perf}})\to
		H^d_{\mathfrak c}(\mathcal B(C))
		\)
		is injective.
	\end{enumerate}

	Using the isomorphism
	\(
	C^{\mathrm{perf}}\cong A^{\mathrm{perf}}\otimes_A C
	\)
	and the fact that \(A\to C\) is weakly etale, one checks that the balanced big
	Cohen--Macaulay property descends from \(C^{\mathrm{perf}}\) to
	\(A^{\mathrm{perf}}\). Indeed, a system of parameters of \(A\) maps to a system of
	parameters of \(C\), and regular sequences descend along faithfully flat maps.
	The balanced condition is preserved because the \(F\)-orbits are compatible with
	weakly etale base change by previous remark, and because the
	product \(\prod_{\mathbb N}(-)\) commutes with finitely presented base change by the same
remark.
Let
\(
x_1,\ldots,x_d
\)
be a system of parameters of \(A\). Its image is a system of parameters of
\(C\). Since \(C^{\mathrm{perf}}\) is balanced big Cohen--Macaulay, this
sequence is regular on \(C^{\mathrm{perf}}\). Faithful flat descent of
regular sequences implies that
\(
x_1,\ldots,x_d
\)
is regular on \(A^{\mathrm{perf}}\). Hence \(A^{\mathrm{perf}}\) is balanced
big Cohen--Macaulay.
Recall  that $M\subset M\otimes_AC$, and by flat base change for local cohomology 	$$H^d_{\mathfrak a}(A^{\mathrm{perf}})\subset H^d_{\mathfrak a}(A^{\mathrm{perf}})\otimes C\cong H^d_{\mathfrak a}(A^{\mathrm{perf}}\otimes C)=
H^d_{\mathfrak a}(C^{\mathrm{perf}}).$$	
By the functoriality of local cohomology and the independent theorem for local cohomology applied to $(\ast)$
we have 
the  
commutative diagram
	\[
	\begin{array}{ccc}	0 & 
  \\
	\downarrow\\
	H^d_{\mathfrak a}(A^{\mathrm{perf}}) & \stackrel{f }\longrightarrow &
	H^d_{\mathfrak a}(\mathcal B(A)) \\[1mm]
	\downarrow & & \downarrow \\[1mm]
0\longrightarrow	H^d_{\mathfrak c}(C^{\mathrm{perf}}) & \longrightarrow &
	H^d_{\mathfrak c}(\mathcal B(C)),
	\end{array}
	\]
  shows	that $f$
  	is injective. Since \(A^{\mathrm{perf}}\) is a balanced big Cohen--Macaulay algebra,
 applying  \cite[Proposition 12.23]{MP}   again, \(A\) is
	\(F\)-nilpotent.
\end{proof}

\section{Failure of descent}

This section is divided into two subsections. 
The first one is about descent of mild singularities like Cohen--Macaulay and complete-intersection. The second subsection
is about F-singularities such as F-purity, F-regularity and related topics.

\subsection{(Generalized) Cohen--Macaulay, Buchsbaum, and complete-intersection}
For generalized Cohen--Macaulayness and the Buchsbaum property, the
situation is different. These properties need not be preserved when
one passes from a reduced ring to a nilpotent thickening.

\begin{example}
	Let \(\kk\) be a field and put
 $
	A=\kk[[x,y,z]]
	$ 	and
 $
	N=A/(x,y).
	$ 	Consider the idealization
 $
	B=A\ltimes N.
	$ Thus, as an \(A\)-module,
 $
	B=A\oplus N,
 $
 with multiplication
 $
	(a,n)(a',n')=(aa',an'+a'n).
 $
 In particular,
 $
	(0\oplus N)^2=0.
 $
 	Consequently,
 $
	\sqrt{0_B}=0\oplus N
	$ 
and
 $
	B_{\mathrm{red}}
	\cong B/(0\oplus N)
	\cong A
	\cong \kk[[x,y,z]].
	$ Hence \(B_{\mathrm{red}}\) is regular, and therefore Cohen--Macaulay,
	generalized Cohen--Macaulay, and Buchsbaum.
 Nevertheless, \(B\) is neither generalized Cohen--Macaulay nor
	Buchsbaum.
\end{example}

\begin{proof}
	The ring \(B\) is local with maximal ideal
 $
	\mathfrak m_B=(x,y,z)B.
 $
 	Moreover,
 $
	\dim B=\dim A=3.
	$ 	Let
 $
	\mathfrak p=(x,y)B.
	$ 	Since \(N=A/(x,y)\), we have
 $
	N_{\mathfrak p}\neq 0,
 $
 and \(x\) and \(y\) annihilate \(N_{\mathfrak p}\). Thus
 $
	\operatorname{depth}N_{\mathfrak p}=0.
	$ 	From the exact sequence
 $
	0\to N\to B\to A\to0
	$ we obtain, after localizing at \(\mathfrak p\),
$
	0\to N_{\mathfrak p}
	\to B_{\mathfrak p}
	\to A_{(x,y)}
	\to0.
	$ Since
 $
	\operatorname{depth}N_{\mathfrak p}=0
 $
 and \(A_{(x,y)}\) has positive depth, it follows that
 $
	\operatorname{depth}B_{\mathfrak p}=0.
	$ On the other hand,
 $
	\dim B_{\mathfrak p}
	=
	\dim A_{(x,y)}
	=
	2.
	$ 
 Therefore
  $
	\operatorname{depth}B_{\mathfrak p}
	<
	\dim B_{\mathfrak p}.
	$ 
	Thus \(B_{\mathfrak p}\) is not Cohen--Macaulay.
 A generalized Cohen--Macaulay local ring is Cohen--Macaulay on its
	punctured spectrum. Hence \(B\) is not generalized Cohen--Macaulay.
	In particular, \(B\) cannot be Buchsbaum.
  Since we have:
	\[
	B=A\ltimes N,\qquad A=\kk[[x,y,z]],\qquad N=A/(x,y),
	\]
the nilradical of \(B\) is
	\(
	\sqrt{0_B}=0\oplus N,
	\)
	so
	\(
	B_{\mathrm{red}}
	\cong B/(0\oplus N)
	\cong A
	\cong \kk[[x,y,z]] 
	\)
	 is  regular.
 \end{proof}
We record the following well-known fact:

\begin{fact}\label{rel} 	\begin{enumerate}
\item[(i)] \(\kk[[t^2,t^3]] \;\cong\; \frac{\kk[[x,y]]}{\bigl(x^3-y^2 \bigr)}\).
\item[(ii)] 
\(\kk[[t^3,t^4,t^5]] \;\cong\; \frac{\kk[[x,y,z]]}{\bigl(y^2 - xz,\; x^3 - yz,\; z^2 - x^2 y\bigr)}\).
\item[(iii)]
\(\kk[[t^4,t^5,t^7]] \;\cong\; \frac{\kk[[x,y,z]]}{\bigl(yz - x^3,\; z^2 - xy^2,\; y^3 - x^2 z\bigr)}\).
 		\end{enumerate}

 	\end{fact}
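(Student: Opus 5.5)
The plan is to prove each of the three isomorphisms of Fact~\ref{rel} in the same way. We use the surjection from the power series ring onto the monomial subring, identify its kernel, and show that the listed binomials generate that kernel. Concretely, define $\pi:\kk[[x,y]]\to\kk[[t]]$ by $x\mapsto t^2$, $y\mapsto t^3$ for (i); define $\pi:\kk[[x,y,z]]\to\kk[[t]]$ by $(x,y,z)\mapsto(t^3,t^4,t^5)$ for (ii); and use $(x,y,z)\mapsto(t^4,t^5,t^7)$ for (iii). These are continuous $\kk$-algebra maps with image the closed subring $\kk[[t^a,t^b,\dots]]$, so the ring in question is $\kk[[x,\dots]]/\ker\pi$.

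First we check directly that each listed generator lies in $\ker\pi$. For instance, in (ii), $y^2-xz\mapsto t^8-t^8$, $x^3-yz\mapsto t^9-t^9$, and $z^2-x^2y\mapsto t^{10}-t^{10}$; the checks in (i) and (iii) are identical. Call the ideal generated by the listed binomials $J$, so $J\subseteq \ker\pi$ and $\ker\pi$ is prime of height $n-1$, where $n$ is the number of variables (the image is a one-dimensional domain).

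For the reverse inclusion we use a normal-form argument. Modulo $J$, every power series can be rewritten as $f_0(x)+\sum_j y^{\epsilon}z^{\delta} f_j(x)$ with finitely many monomials $y^\epsilon z^\delta$ in $y,z$, namely those not divisible by the leading terms of $J$. For (i) these are $\{1,y\}$. For (ii) they are $\{1,y,z\}$, using $y^2\equiv xz$, $yz\equiv x^3$, $z^2\equiv x^2y$. For (iii) they are $\{1,y,z,y^2\}$, using $yz\equiv x^3$, $z^2\equiv xy^2$, $y^3\equiv x^2z$. In each case these monomials map to $t^{c}$ with the exponents $c$ forming a complete set of distinct residues modulo the multiplicity $a=\deg\pi(x)$. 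In (ii) they are $0,4,5$ mod $3$, and in (iii) they are $0,5,7,10$ mod $4$. Consequently the images $t^c\,\pi(f_j)$ have $t$-adic supports in disjoint residue classes mod $a$. Hence if an element of $\ker\pi$ is written in normal form, every $f_j$ must vanish, which gives $\ker\pi\subseteq J$. Using $\mathfrak m$-adic completeness (or Weierstrass division) to justify that the reduction to normal form converges is routine.

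The main obstacle is making the reduction to normal form rigorous in the power series setting, and verifying that the rewriting rules genuinely terminate to the stated finite basis. I would handle this by noting that $\kk[[x]]\to\kk[[x,\dots]]/J$ is finite: by the relations, $\kk[[x,\dots]]/J$ is generated as a $\kk[[x]]$-module by the listed monomials, via Nakayama over $\kk[[x]]$ after observing that modulo $x$ the ring is spanned by them. The residue argument then shows the images are $\kk[[x]]$-linearly independent, so the surjection $\kk[[x,\dots]]/J\to\kk[[t^a,\dots]]$ is an isomorphism of free $\kk[[x]]$-modules of rank $a$. For (i), one can alternatively say that $x^3-y^2$ is irreducible and generates a height-one prime contained in the height-one prime $\ker\pi$, so the two are equal.
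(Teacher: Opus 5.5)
The paper gives no proof of Fact~\ref{rel}. It records the three presentations as well known and then uses them, e.g.\ to get $\ker\overline{\psi}=(y^2-x^3)$ in Theorem~\ref{42}, the presentation of $R/P$ in Example~\ref{32}, and $\ker\theta=I$ in the Gorenstein example of Section~4. So there is no argument of the paper to compare with, and your proof has to stand on its own.

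It does: your argument is correct and complete.
\begin{itemize}
\item Modulo $x$, the quotient $M=\kk[[x,\dots]]/J$ becomes $\kk[[y]]/(y^2)$, $\kk[[y,z]]/(y^2,yz,z^2)$, respectively $\kk[[y,z]]/(yz,z^2,y^3)$.
\item These are spanned by $1,y$; $1,y,z$; $1,y,z,y^2$, and these monomials lift to $\kk[[x]]$-module generators of $M$.
\item Their images $t^0,t^3$; $t^0,t^4,t^5$; $t^0,t^5,t^7,t^{10}$ lie in pairwise distinct residue classes modulo $a=2,3,4$.
\item Hence any $\sum_j f_j(x)m_j$ in $\ker\pi$ has all $f_j=0$, so $\ker\pi=J$.
\end{itemize}

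Two points deserve precision.

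First, the lifting step cannot be the ordinary Nakayama lemma. That lemma presupposes finite generation, which is exactly what you are proving. What you need is the complete version, e.g.\ Matsumura, \emph{Commutative Ring Theory}, Theorem~8.4. Its hypotheses hold: $\kk[[x]]$ is $(x)$-adically complete, and $M$ is $(x)$-adically separated by Krull's intersection theorem, because $M$ is a noetherian local ring with $x$ in its maximal ideal. With that citation, the informal ``normal form converges'' discussion in your third paragraph becomes unnecessary.

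Second, the exponent sets $\{0,3\}$, $\{0,4,5\}$, $\{0,5,7,10\}$ are exactly the Ap\'ery sets of $\langle 2,3\rangle$, $\langle 3,4,5\rangle$, $\langle 4,5,7\rangle$ with respect to $2,3,4$. This explains why the number of standard monomials equals the multiplicity. It also shows that your argument proves more than the statement: each semigroup ring is free over $\kk[[t^a]]$ on these monomials.

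Your argument is characteristic-free, which is what the paper actually needs: it applies (i) and (iii) over $\mathbb F_2$ and (ii) in characteristic $3$. If you keep the alternative argument for (i), add a line showing $x^3-y^2$ is irreducible in every characteristic. By Weierstrass preparation, a factorization would be $(y-\alpha)(y-\beta)$ with $\alpha+\beta=0$ and $\alpha^2=x^3$, which is impossible by parity of $x$-orders.
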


\begin{example}\label{32}
	Let $\kk$ be algebraically closed of characteristic $3$, and put
	\(
	R=\kk[[x,y,z,u,v]]\to S=\kk[[t,y,z,u,v]],
	\)
is defined by $x\mapsto t^3$. Then
	\(
	S\cong R[T]/(T^3-x)
	\)
	is finite free of rank $3$ over $R$. Let
	\(
	P=(y^2-xz,\ yz-x^3,\ z^2-x^2y).
	\)
	Then \[
	{S/\sqrt{PS}\text{ is complete- intersection}\not\Longrightarrow R/P\text{ is complete-intersection}}.
	\]
\end{example}

\begin{proof}
	Recall from  {Fact} \ref{rel} (ii) that
	\(
	R/P\cong\kk[[t^3,t^4,t^5,u,v]],
	\) which is a three-dimensional Cohen--Macaulay ring. The height of $P$ is
	$2$ and it has three minimal generators, so it is not a complete intersection.
	In $S$,
	\(
	PS=(y^2-t^3z,\ yz-t^9,\ z^2-t^6y).
	\)
We bring two auxiliary claims:	
\begin{claim}\label{1}
		The ideal
		\(
		\q:=(y-t^4,\;z-t^5)\subseteq S
		\)
		is prime.
	\end{claim}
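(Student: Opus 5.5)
The plan is to identify the quotient $S/\q$ explicitly. Since $S=\kk[[t,y,z,u,v]]$ is a power series ring and the generators of $\q$ are $y-t^4$ and $z-t^5$, we may eliminate $y$ and $z$. Concretely, consider the continuous $\kk$-algebra homomorphism
\[
\psi\colon S=\kk[[t,y,z,u,v]]\longrightarrow \kk[[t,u,v]],\qquad t\mapsto t,\; y\mapsto t^4,\; z\mapsto t^5,\; u\mapsto u,\; v\mapsto v .
\]
It is well defined because $t^4$ and $t^5$ lie in the maximal ideal of $\kk[[t,u,v]]$. It is surjective, since $t,u,v$ are in its image and the map is a local homomorphism of complete rings.

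The main step is to show $\ker\psi=\q$. Clearly $\q\subseteq\ker\psi$. For the reverse inclusion, change coordinates in $S$ by setting $y'=y-t^4$ and $z'=z-t^5$. The substitution $(t,y,z,u,v)\mapsto(t,y',z',u,v)$ is a continuous automorphism of $S$, because its Jacobian is unitriangular. Hence $S=\kk[[t,y',z',u,v]]$, and in these coordinates $\q=(y',z')$. Every $f\in S$ can be written as $f=f_0(t,u,v)+y'g+z'h$, by expanding $f$ in powers of $y'$ and $z'$. Moreover $\psi$ sends $y'$ and $z'$ to $0$ and restricts to the identity on $\kk[[t,u,v]]$. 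Thus $\psi(f)=f_0$, so $\psi(f)=0$ forces $f\in(y',z')=\q$.

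Therefore $S/\q\cong\kk[[t,u,v]]$, which is a regular local domain. Hence $\q$ is prime, and in fact $S/\q$ is regular of dimension $3$.

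The only point needing care is the coordinate-change and division step, which is formal in power series rings. I do not expect a real obstacle here; the claim is essentially the observation that $\q$ is the ideal of the graph of $t\mapsto(t^4,t^5)$.

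For the surrounding example, the claim will be used as follows: $PS\subseteq\q$, which one checks directly, e.g. $yz-t^9\equiv t^4t^5-t^9=0$. Then one presumably shows $\sqrt{PS}=\q$ using the characteristic-$3$ factorizations $y^3-t^{12}=(y-t^4)^3$ and $z^3-t^{15}=(z-t^5)^3$. That would give $S/\sqrt{PS}\cong\kk[[t,u,v]]$, which is regular and hence a complete intersection.
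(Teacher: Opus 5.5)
Your proof is correct and follows the same route as the paper: the same substitution map $S\to\kk[[t,u,v]]$, $y\mapsto t^4$, $z\mapsto t^5$, is shown to have kernel $\q$, so $S/\q\cong\kk[[t,u,v]]$ is a domain. Your change of coordinates $y'=y-t^4$, $z'=z-t^5$ is a cleaner, rigorous version of the step the paper only states informally, namely that every element of $S$ is congruent modulo $\q$ to a power series in $t,u,v$ alone.
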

	
	\begin{proof}
Consider the \(\kk\)-algebra homomorphism
		\(
		\varphi\colon S=\kk[[t,y,z,u,v]]\to \kk[[t,u,v]]
		\)
		defined on the variables by
		\[
		\varphi(t)=t,\qquad
		\varphi(y)=t^4,\qquad
		\varphi(z)=t^5,\qquad
		\varphi(u)=u,\qquad
		\varphi(v)=v.
		\]
		This extends uniquely to a continuous \(\kk\)-algebra homomorphism of
		formal power series rings.
		Every element of \(\kk[[t,u,v]]\) is a formal power series in
		\(t,u,v\). Since \(\varphi(t)=t\), \(\varphi(u)=u\), and
		\(\varphi(v)=v\), the image of \(\varphi\) contains all formal power
		series in \(t,u,v\). Hence \(\varphi\) is surjective.
Now, we show \(\ker\varphi=\q\). 
		First, we check that \(\q\subseteq\ker\varphi\). We have
		\(
		\varphi(y-t^4)=\varphi(y)-\varphi(t)^4=t^4-t^4=0,
		\)
		and
		\(
		\varphi(z-t^5)=\varphi(z)-\varphi(t)^5=t^5-t^5=0.
		\)
		Hence \(y-t^4,\;z-t^5\in\ker\varphi\), so
		\(
		\q=(y-t^4,\;z-t^5)\subseteq\ker\varphi.
		\)
		Conversely, let \(f\in\ker\varphi\). We must show that
		\(f\in\q\). Write \(f\) as a formal power series in the variables
		\(t,y,z,u,v\). Since \(\varphi\) fixes \(t,u,v\) and sends
		\(y\mapsto t^4\), \(z\mapsto t^5\), we may rewrite \(f\) modulo
		\(\q\) by replacing \(y\) with \(t^4\) and \(z\) with \(t^5\).
		Indeed, in the quotient \(S/\q\), we have the relations
		\(
		y\equiv t^4,  z\equiv t^5.
		\)
		Therefore every element of \(S\) is congruent modulo \(\q\) to a
		formal power series in \(t,u,v\) alone. That is,
		\(
		S/\q\;\cong\;\kk[[t,u,v]]
		\)
		via the map induced by \(\varphi\).
		More explicitly, the composition
		\(
		S\;\xrightarrow{\;\varphi\;}\;\kk[[t,u,v]]
		\)
		has kernel containing \(\q\), and the induced map
		\(
		\overline{\varphi}\colon S/\q\longrightarrow \kk[[t,u,v]]
		\)
		is an isomorphism: it is surjective because \(\varphi\) is
		surjective, and it is injective because any element of \(S\) mapping
		to zero under \(\varphi\) can be reduced modulo \(\q\) to a power
		series in \(t,u,v\) that maps to itself, hence must be zero.
		Therefore
		\(
		\ker\varphi=\q.
		\)
	\end{proof}

Next, we bring another claim
\begin{claim}
	In \(S=\kk[[t,y,z,u,v]]\), with
	we have
	\(
	\sqrt{PS}=\q=(y-t^4,\ z-t^5).
	\)
\end{claim}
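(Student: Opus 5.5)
The plan is to prove the two inclusions $\sqrt{PS}\subseteq\q$ and $\q\subseteq\sqrt{PS}$ separately. The first uses that $\q$ is prime (Claim \ref{1}); the second uses characteristic $3$ to show that the cubes of the two generators of $\q$ lie in $PS$.

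\emph{Step 1: $PS\subseteq\q$.} I will use the surjection $\varphi\colon S\to\kk[[t,u,v]]$ from the proof of Claim \ref{1}, whose kernel is $\q$. I check that the three generators of
\[
PS=(y^2-t^3z,\ yz-t^9,\ z^2-t^6y)
\]
are killed by $\varphi$:
\[
t^8-t^3t^5=0,\qquad t^4t^5-t^9=0,\qquad t^{10}-t^6t^4=0 .
\]
So $PS\subseteq\ker\varphi=\q$. Since $\q$ is prime by Claim \ref{1}, taking radicals gives $\sqrt{PS}\subseteq\q$.

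\emph{Step 2: $\q\subseteq\sqrt{PS}$.} It suffices to show that a power of each generator of $\q$ lies in $PS$. This is where $\operatorname{char}\kk=3$ is used, through the Frobenius identities
\[
(y-t^4)^3=y^3-t^{12},\qquad (z-t^5)^3=z^3-t^{15}.
\]
I then write these cubes explicitly as combinations of the generators of $PS$:
\[
y^3-t^{12}=y\,(y^2-t^3z)+t^3\,(yz-t^9),\qquad
z^3-t^{15}=z\,(z^2-t^6y)+t^6\,(yz-t^9).
\]
Both identities are verified by expanding the right-hand sides. Hence $y-t^4$ and $z-t^5$ lie in $\sqrt{PS}$, and therefore $\q\subseteq\sqrt{PS}$.

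Combining the two steps gives $\sqrt{PS}=\q$. The only step that needs care is Step 2, namely finding the explicit membership of the cubes in $PS$. A more abstract alternative would be a height argument: $\operatorname{ht}PS=2$ by flatness and $\q$ is a height-$2$ prime containing $PS$. But that would require showing $\q$ is the \emph{unique} minimal prime of $PS$, so the explicit characteristic-$3$ computation is the cleaner route.
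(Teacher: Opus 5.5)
Your proof is correct and follows essentially the same route as the paper. You get $\sqrt{PS}\subseteq\q$ because the generators of $PS$ vanish modulo the prime $\q$, and you get the reverse inclusion from the characteristic-$3$ identities $(y-t^4)^3=y^3-t^{12}$ and $(z-t^5)^3=z^3-t^{15}$, with your explicit combinations $y(y^2-t^3z)+t^3(yz-t^9)$ and $z(z^2-t^6y)+t^6(yz-t^9)$ being exactly the paper's computations $\bar y^3=\bar y\cdot t^3\bar z=t^{12}$ and $\bar z^3=\bar z\cdot t^6\bar y=t^{15}$ in $S/PS$, written as ideal membership.
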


\begin{proof}
	We want to show two inclusions:
	\begin{enumerate}
		\item[(1)] \(\sqrt{PS}\supseteq \q\).
		\item[(2)] \(\sqrt{PS}\subseteq \q\).
	\end{enumerate}
	
	Since by Claim \ref{1} \(\q\) is prime, the second inclusion is equivalent to showing
	that every generator of \(PS\) lies in \(\q\).
	 We start by showing that \(PS\subseteq \q\).
	Recall
	\(
	\q=(y-t^4,\ z-t^5).
	\)
	Modulo \(\q\), we have
	\(
	y\equiv t^4\),\(z\equiv t^5.
	\)
	Now check the three generators of \(PS\)
	Modulo \(\q\),
	\(
	y^2-t^3z \equiv (t^4)^2-t^3(t^5)=t^8-t^8=0.
	\)
	So \(y^2-t^3z\in \q\).
	Modulo \(\q\),
	\(
	yz-t^9 \equiv t^4\cdot t^5-t^9=t^9-t^9=0.
	\)
	So \(yz-t^9\in \q\).
For third generator  \(z^2-t^6y\), 
modulo \(\q\),
	\(
	z^2-t^6y \equiv (t^5)^2-t^6(t^4)=t^{10}-t^{10}=0.
	\)
	So \(z^2-t^6y\in \q\).
Therefore all generators of \(PS\) lie in \(\q\). Hence
	\(
	PS\subseteq \q.
	\)
	Since \(\q\) is prime, it is radical, so
	\(
	\sqrt{PS}\subseteq \q.
	\)
Here, we show the reverse inclusion \(\q\subseteq \sqrt{PS}\).
	It remains to prove that \(y-t^4\) and \(z-t^5\) lie in \(\sqrt{PS}\).
	We work modulo \(PS\). Let
	\(
	\bar y=y \bmod PS,  \bar z=z \bmod PS.
	\)
	The generators of \(PS\) give the relations
	\[
	\bar y^2=t^3\bar z,\qquad \bar y\bar z=t^9,\qquad
	\bar z^2=t^6\bar y.
	\]
	
	We want to show that \(\bar y=t^4\) and \(\bar z=t^5\) in some power,
	i.e.\ that
	\(
	\bar y-t^4,  \bar z-t^5
	\)
	are nilpotent modulo \(PS\).
	Compute \(\bar y^3\):
	\[
	\bar y^3=\bar y\cdot \bar y^2=\bar y\cdot t^3\bar z
	=t^3\bar y\bar z=t^3\cdot t^9=t^{12}.
	\]
	So
	\(
	\bar y^3=t^{12}.
	\)
	Thus
	\(
	\bar y^3-t^{12}=0 \quad\text{modulo }PS.
	\)
	Similarly, compute \(\bar z^3\):
	\[
	\bar z^3=\bar z\cdot \bar z^2=\bar z\cdot t^6\bar y
	=t^6\bar y\bar z=t^6\cdot t^9=t^{15}.
	\]
	So
	\(
	\bar z^3=t^{15}.
	\)
	Thus
	\(
	\bar z^3-t^{15}=0.
	\) Now consider the element
	\(
	\bar y-t^4.
	\)
	Compute:
	\[
	(\bar y-t^4)^3
	=\bar y^3-3t^4\bar y^2+3t^8\bar y-t^{12}.
	\]
	Since \(\operatorname{char}\kk=3\), the middle terms vanish because
	\(3=0\) in \(\kk\). Thus
	\(
	(\bar y-t^4)^3=\bar y^3-t^{12}=0.
	\)
	So indeed
	\(
	(\bar y-t^4)^3=0 \quad\text{modulo }PS.
	\)
	Therefore \(y-t^4\in\sqrt{PS}\).
	Similarly,
	\[
	(\bar z-t^5)^3
	=\bar z^3-3t^5\bar z^2+3t^{10}\bar z-t^{15}
	=\bar z^3-t^{15}=0
	\]
	since \(\operatorname{char}\kk=3\). Hence
	\(
	(\bar z-t^5)^3=0 \quad\text{modulo }PS.
	\)
	Therefore \(z-t^5\in\sqrt{PS}\).
	Thus
	\(
	y-t^4,\ z-t^5\in\sqrt{PS},
	\)
	so
	\(
	\q=(y-t^4,z-t^5)\subseteq\sqrt{PS}.
	\)	Since \(\q\) is prime and hence radical, the first inclusion gives
	\(
	\sqrt{PS}\subseteq\q.
	\)
	The second inclusion gives
	\(
	\q\subseteq\sqrt{PS}.
	\)
\end{proof}

Now, we are ready to proceed {Example} \ref{32}.	The equations have the branch $y=t^4$, $z=t^5$, and in this example the
	set-theoretic calculation gives
$
	\sqrt{PS}=\q$,
$S/\sqrt{PS}\cong\kk[[t,u,v]].
$
	Thus
$S/\sqrt{PS}$ is regular, and $R/P$   Cohen--Macaulay but not  complete-intersection.
\end{proof}

\subsection{F-singularity}
In this section we show the failure of $F$-purity (resp. $F$-rationality, $F$-injectiveity) descent through a radical:
	\[
{S/\sqrt{PS}\text{ is  $F$-purity}\not\Longrightarrow R/P\text{ is $F$-purity.}}
\]
\begin{theorem}
	Let $\kk$ be a perfect field of characteristic $p:=2$. Set
 $
	R=\kk[[x,y]]
	$ and
 $
	S=R[t]/(t^p-x)\cong \kk[[t,y]],
	$ where the homomorphism $R\to S$ is given by
 $
	x\mapsto t^p, y\mapsto y.
	$ Then $S$ is finite free of rank $p$ as an $R$-module.
 Let
 $
	P=(y^p-x^{p+1})\subset R.
	$ Then $P$ is a prime ideal,   and $S/\sqrt{PS}$
	  is a regular local. Also, the following are valid:
	\begin{enumerate}
	\item[(1)] 
	$ 
	R/P
	$ 
	is not $F$-pure.
	\item[(2)] 
	$
	R/P
	$ is not $F$-injective.	\item[(3)] 
	$
	R/P
	$ 	is not $F$-rational.
\end{enumerate}	
\end{theorem}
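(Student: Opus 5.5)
With $p=2$ the ideal is $P=(y^2-x^3)$, so $R/P\cong\kk[[t^2,t^3]]$ by Fact~\ref{rel}(i), the cusp. The plan is to prove the structural claims first and then attack the three $F$-singularity statements in the order (2), then (1) and (3).

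\emph{Structural part.}
\begin{enumerate}
\item[(a)] $S=R[t]/(t^2-x)$ is free with basis $1,t$, since $t^2-x$ is monic in $t$.
\item[(b)] $P$ is prime because $y^2-x^3$ is irreducible in $\kk[[x,y]]$. Alternatively, $R/P$ embeds in $\kk[[t]]$ as $\kk[[t^2,t^3]]$, which is a domain.
\item[(c)] In $S$ we have $PS=(y^2-t^6)$. Since $\operatorname{char}\kk=2$, $y^2-t^6=(y-t^3)^2$.
\item[(d)] Hence $\sqrt{PS}=(y-t^3)$. This ideal is prime, because $S/(y-t^3)\cong\kk[[t]]$ via $y\mapsto t^3$; the argument is exactly that of Claim~\ref{1}.
\item[(e)] Consequently $S/\sqrt{PS}\cong\kk[[t]]$ is regular local.
\end{enumerate}

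\emph{Failure of $F$-injectivity, item (2).} Put $A=\kk[[t^2,t^3]]$. It is a one-dimensional Cohen--Macaulay domain, so it suffices to show that Frobenius on $H^1_{\fm}(A)$ is not injective.
\begin{itemize}
\item Take the parameter $x=t^2$. Then $H^1_\fm(A)=A_x/A$, and Frobenius sends $[a/x^n]\mapsto[a^2/x^{2n}]$.
\item Consider $\eta=[t^3/t^4]=[y/x^2]$. It is nonzero in $A_x/A$, since $t^{-1}\notin A$.
\item Its Frobenius image is $[y^2/x^4]=[x^3/x^4]=[1/x]=[t^{-2}]$. This is still nonzero, so $\eta$ is not yet a witness.
\item Instead take $\eta=[y/x]=[t]$. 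It is nonzero, because $t\notin A$. Its image is $F(\eta)=[y^2/x^2]=[x]=0$.
\end{itemize}
So $t$ represents a nonzero class killed by Frobenius, and $A$ is not $F$-injective.

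\emph{Items (1) and (3).} $F$-purity implies $F$-injectivity, and $F$-rationality implies $F$-injectivity. Both therefore follow from (2). For (1) I would also record a direct check by Fedder's criterion: $(y^2-x^3)\in(x^2,y^2)$, so $(y^2-x^3)^{p-1}=y^2-x^3\in\fm^{[2]}$, and $A$ is not $F$-pure. For (3) there is an alternative: $F$-rational rings are normal, and $A$ is not normal since $t$ is integral over it but $t\notin A$.

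\emph{Main obstacle.} The step requiring most care is the local cohomology computation in (2), namely verifying that $t\notin A$ in the Čech description. With that in hand, the normality and Fedder checks give independent confirmation of (1) and (3).
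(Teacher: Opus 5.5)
Your proof is correct, but it organizes the three $F$-singularity items differently from the paper. The structural part (freeness, primality of $P$, $y^2-t^6=(y-t^3)^2$, hence $\sqrt{PS}=(y-t^3)$ and $S/\sqrt{PS}\cong\kk[[t]]$) is the same as the paper's.

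\textbf{The paper's route.}
\begin{itemize}
\item It proves (1) first, by Fedder's criterion: $y^2-x^3\in(x^2,y^2)$.
\item It then gets (2) from (1), using that $F$-injectivity and $F$-purity agree for $F$-finite Gorenstein local rings. This is where perfectness of $\kk$ enters.
\item It proves (3) separately, by exhibiting $y\in(x)^F\setminus(x)$, or by citing that one-dimensional $F$-rational rings are regular.
\end{itemize}

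\textbf{Your route.} You prove (2) directly and use it as the hub.
\begin{itemize}
\item Your class $[y/x]\in H^1_{\fm}(A)=A_x/A$ is nonzero exactly because $y\notin(x)$. It is killed by Frobenius exactly because $y^2\in(x^2)$. So it is the paper's Frobenius-closure witness from (3), written in local cohomology.
\item Items (1) and (3) then follow from the elementary implications $F$-pure $\Rightarrow$ $F$-injective and $F$-rational $\Rightarrow$ $F$-injective.
\item The second implication is immediate here. If $F([a/x^n])=0$, then $a^2\in(x^{2n})$, so $a\in(x^n)^F\subseteq(x^n)^*$. $F$-rationality gives $(x^n)^*=(x^n)$, so the class is $0$.
\end{itemize}

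\textbf{Comparison.} Your approach is more economical: one explicit element settles all three items. It needs neither Fedder's criterion nor the Gorenstein equivalence, so the main line does not use that $\kk$ is perfect. Your Fedder check and normality argument are then just independent confirmations. The paper's route instead gives each item its own certificate, including a purely ideal-membership test for non-$F$-purity. The cost is that its proof of (2) rests on a deeper theorem than the one-line computation you give.
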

		\begin{proof}
		Since $t^p=x$, we have in $S$ that
	 $
		y^p-x^{p+1}
		=
		y^p-t^{p(p+1)}
		=
		\bigl(y-t^{p+1}\bigr)^p,
		$ where the last equality follows from the fact that
		$\operatorname{char}k=p$.  Hence
$
		PS=\bigl((y-t^{p+1})^p\bigr),
		$ and therefore
 $
		\sqrt{PS}=(y-t^{p+1}).
 $
It follows immediately that
$
		S/\sqrt{PS}
		\cong
		\kk[[t,y]]/(y-t^{p+1})
		\cong
		\kk[[t]].
		$ Thus $S/\sqrt{PS}$ is a regular local ring.  In particular, it is
		strongly $F$-regular and therefore $F$-pure (resp. F-injective, F-rational).
	
	(1):	We now show that $R/P$ is not $F$-pure.  Put
	$
		f=y^p-x^{p+1}.
		$ Since $R=\kk[[x,y]]$ is a regular local ring and $R/P=R/(f)$ is a
		hypersurface, Fedder's criterion (see \cite[Theorem 2.5]{MP}) says that $R/(f)$ is $F$-pure if and
		only if
 $
		(f^p:f)\notin (x^p,y^p).
		$ But $p=2$ and so $f\in(f^2:f)$.
		However,
 $
		y^p\in (x^p,y^p)
		$ and
	 $
		x^{p+1}=x\cdot x^p\in (x^p,y^p).
		$ 	Hence
	 $
		f=y^p-x^{p+1}\in (x^p,y^p).
		$ Therefore
$
		R/P
		$ 	is not $F$-pure.
		
	(2): The ring $A$ is a one-dimensional hypersurface, and hence it is
		Gorenstein.
	Since $\kk$ is $F$-finite, the local ring $A$ is
		$F$-finite.  For an $F$-finite Gorenstein local ring, $F$-injectivity
		is equivalent to $F$-purity.
	 We have already seen in (1) that $A$ is not $F$-pure (see \cite[Ex. 21]{MP}).   Therefore $A$ is not $F$-injective.

	(3):
Recall that
 $
		A=R/P
		=
		\kk[[x,y]]/(y^p-x^{p+1}).
		$ 	The parametrization
	 $
		x=t^p$,  $y=t^{p+1}
		$ 	gives
	 $
		A\cong \kk[[t^p,t^{p+1}]].
		$ 	In particular, $A$ is a one-dimensional local domain.
	 	The element
	 $
		x=t^p
		$ 	is a parameter of $A$.  Consider the element
 $
		y=t^{p+1}.
		$ 	We have
 $
		y^p=t^{p(p+1)}
		=(t^p)^{p+1}
		=x^{p+1}\in (x)^{[p]}.
		$ 
	 Hence
 $
		y\in (x)^F\subseteq (x)^*,
 $
 where $(x)^F$ and $(x)^*$ denote the Frobenius closure and tight
		closure, respectively.
 However,
 $
		y=t^{p+1}\notin (t^p)=(x).
		$ 	Indeed, every nonzero element of $(t^p)$ has $t$-adic order at least
		$p$, and membership would require
	 $
		t^{p+1}=t^p a
		$ 
		with $a=t$, whereas $t\notin \kk[[t^p,t^{p+1}]]$.
		Thus the parameter ideal $(x)$ is not tightly closed.  Hence $A$ is
		not $F$-rational (or use \cite[Proposition 4.4]{MP} that says one-dimensional F-rational rings are regular).
	\end{proof}

\section{A  problem by Lipman}

	We construct an explicit counterexample to
	\[
	S/\sqrt{PS}\text{ is regular}\quad\Longrightarrow\quad R/P\text{ is regular}
	\]
	for a finite flat, indeed finite free, extension of regular local rings
	whose ambient rings have characteristic zero.  
	
	\begin{notation}
		By $\mathbb Z_2$ we mean the ring of 2-adic integers. It is a discrete valuation ring of characteristic zero, with maximal ideal $(2)$, and the residue field $\mathbb F_2$. So, 
$\mathbb Z_2=\boldsymbol{W}(\mathbb F_2)$.	\end{notation}

\begin{theorem}\label{42}
	Let
	\(
	R=\mathbb Z_2[[x,y]]\),
\(
	S=\mathbb Z_2[[t,y]],
	\)
	and define
	\(\varphi:R\to S\)
	by
	\(
	\varphi(x)=t^2, 
	\varphi(y)=y.
	\)
	Let
	\(
	P=(2,\ y^2-x^3)\subset R.
	\)
	Then:
	\begin{enumerate}
		\item \(R\) and \(S\) are complete regular local rings of characteristic zero;
		\item \(S\) is finite free of rank \(2\) over \(R\);
		\item \(P\) is prime;
		\item
		\(
		\sqrt{PS}=(2,\ y-t^3);
		\)
		\item
		\(
		S/\sqrt{PS}
		\)
		is regular;
		\item
		\(
		R/P
		\)
		is not regular.
	\end{enumerate}
	Consequently,
	\({
		S/\sqrt{PS}\text{ regular}
		\not\Longrightarrow
		R/P\text{ regular}.
	}
	\)
\end{theorem}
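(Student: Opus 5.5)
Each item is a direct computation, and I take them in the order listed. For (1), $\mathbb Z_2$ is a complete DVR of characteristic zero, so $\mathbb Z_2[[x,y]]$ and $\mathbb Z_2[[t,y]]$ are complete regular local rings of dimension $3$, with regular systems of parameters $2,x,y$ and $2,t,y$; both contain $\mathbb Z$, so they have characteristic zero. For (2), I identify $S\cong R[T]/(T^2-x)$ by sending $T\mapsto t$, which gives $S=R\oplus Rt$. Concretely, write each power series in $t$ as $f(t^2,y)+t\,g(t^2,y)$ by splitting even and odd powers of $t$; this decomposition exists and is unique. So $S$ is free of rank $2$ with basis $1,t$, and $\varphi$ is injective.

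For (3), I compute $R/P\cong \mathbb F_2[[x,y]]/(y^2-x^3)$. By Fact \ref{rel}(i) this is $\cong \mathbb F_2[[t^2,t^3]]$, a subring of $\mathbb F_2[[t]]$ and hence a domain, so $P$ is prime. (Equivalently, $y^2-x^3$ is irreducible in $\mathbb F_2[[x,y]]$.)

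For (4), note that $PS=(2,\,y^2-t^6)$, and $S/2S\cong \mathbb F_2[[t,y]]$. In characteristic $2$ we have $y^2-t^6=(y-t^3)^2$. Hence
\[
PS/2S=\bigl((y-t^3)^2\bigr)\subseteq \mathbb F_2[[t,y]],
\]
and its radical is $(y-t^3)$, which is prime because $\mathbb F_2[[t,y]]/(y-t^3)\cong\mathbb F_2[[t]]$. Pulling back along $S\to S/2S$ gives $\sqrt{PS}=(2,\,y-t^3)$. For (5), $S/\sqrt{PS}\cong \mathbb F_2[[t]]$ is a DVR, hence regular. Alternatively, $2,\,y-t^3$ is part of the regular system of parameters $2,\,y-t^3,\,t$ of $S$.

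For (6), $A=R/P$ is one-dimensional with maximal ideal generated by the images of $x$ and $y$. Its embedding dimension is $\dim_{\mathbb F_2}\mathfrak m_A/\mathfrak m_A^2=2$, because $y^2-x^3\in(x,y)^2$. Equivalently, $P=(2,\,y^2-x^3)$ is not generated by part of a regular system of parameters of $R$, since $y^2-x^3\in \mathfrak m_R^2+(2)$. As $2>1=\dim A$, the ring $A$ is not regular; it is the cusp. The only step that needs real care is the radical computation in (4). The key point there is that it is done modulo $2$, where the Frobenius identity $(y-t^3)^2=y^2-t^6$ holds. Once we reduce to $\mathbb F_2[[t,y]]$, everything is a principal-ideal computation. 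Finally, this theorem is the same as Theorem \ref{142}, written with $\boldsymbol W(\mathbb F_2)=\mathbb Z_2$.
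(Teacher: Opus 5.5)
Your proof is correct and follows essentially the same route as the paper: the free basis $1,t$ from splitting even and odd powers of $t$, the identification $R/P\cong\mathbb F_2[[t^2,t^3]]$ via Fact~\ref{rel}(i), and the characteristic-two identity $y^2-t^6=(y-t^3)^2$ for the radical. The paper squares $\bar y-t^3$ directly in $S/PS$ instead of first passing to $S/2S$, which is the same computation; for (6), your embedding-dimension argument supplies the justification the paper leaves implicit when it simply asserts that $\mathbb F_2[[t^2,t^3]]$ is not regular.
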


\begin{proof}
	\textbf{(1) }
	The ring \(\mathbb Z_2\) is a complete discrete valuation ring with
	uniformizer \(2\) and residue field \(\mathbb F_2\). In particular,
	\(\mathbb Z_2\) is a one-dimensional regular local ring of
	characteristic zero. Therefore the formal power series rings
	\(
	R=\mathbb Z_2[[x,y]]
,
	S=\mathbb Z_2[[t,y]]
	\)
	are complete regular local rings. Their maximal ideals are
	\[
	\mathfrak m_R=(2,x,y),
	\qquad
	\mathfrak m_S=(2,t,y),
	 	\qquad 
	 	\dim R=\dim S=3.
	\]
	Clearly, 
	$\operatorname{char} R=\operatorname{char} S=0.
$
	
	\textbf{(2)}
	There is an isomorphism
	\(
	S\cong R[T]/(T^2-x)
	\)
	sending the class of \(T\) to \(t\). Since \(T^2-x\) is monic,
	every element of the quotient has a representative of degree at most
	one in \(T\). Thus
	\(
	S=R\oplus Rt.
	\)
	The representation is unique: if \(a+bt=0\) with \(a,b\in R\), then
	after substituting \(x=t^2\), the even and odd powers of \(t\) give
	\(a=b=0\). Hence \(S\) is finite free of rank \(2\) over \(R\).
	
	\textbf{(3)}
	Consider the ring homomorphism
$
	\psi:\mathbb Z_2[[x,y]]\to\mathbb F_2[[u]]
$
	defined by
	\(
	x\mapsto u^2, 
	y\mapsto u^3,
	\) and $2\mapsto 0$.
	We first show that
	\(
	\ker\psi=(2,\ y^2-x^3).
	\)
Indeed, let
	\(
	J=(2,\ y^2-x^3)\subset\mathbb Z_2[[x,y]].
	\)
	Clearly \(J\subseteq\ker\psi\), since
	\[
	\psi(2)=0,\qquad
	\psi(y^2-x^3)=u^6-u^6=0.
	\]
	
	For the reverse inclusion, let \(f\in\ker\psi\). Write
	\(
	f=f_0+2f_1,
	\)
	where \(f_0\in\mathbb F_2[[x,y]]\) is the reduction of \(f\) modulo
	\(2\), and \(f_1\in\mathbb Z_2[[x,y]]\). Since \(\psi(f)=0\), reducing
	modulo \(2\) gives
	\(
	\overline{\psi}(f_0)=0,
	\)
	where
	\(
	\overline{\psi}:\mathbb F_2[[x,y]]\to\mathbb F_2[[u]]
	\)
	is the induced map \(x\mapsto u^2\), \(y\mapsto u^3\).
	We claim that
	\[
	\ker\overline{\psi}=(y^2-x^3)\subset\mathbb F_2[[x,y]].
	\]
	Indeed, consider the quotient
	\(
	A:=\mathbb F_2[[x,y]]/(y^2-x^3).
	\)
	In \(A\),   the relation
	\(
	y^2=x^3 
	\)
	   shows that  
	\(
	A\cong\mathbb F_2[[u^2,u^3]],
	\) see {Fact} \ref{rel} (i).
 In particular,
	\(
	\ker\overline{\psi}=(y^2-x^3).
	\)
	Since \(f_0\in\ker\overline{\psi}=(y^2-x^3)\), we have
	\(
	f_0=(y^2-x^3)g_0
	\)
	for some \(g_0\in\mathbb F_2[[x,y]]\). Lifting \(g_0\) to
	\(g\in\mathbb Z_2[[x,y]]\), we get
	\(
	f-(y^2-x^3)g\in 2\mathbb Z_2[[x,y]].
	\)
	Thus
	\(
	f\in (2,\ y^2-x^3)=J.
	\)
	Therefore \(\ker\psi=J\).
	Now, since \(\mathbb F_2[[u^2,u^3]]\) is a subring of the domain
	\(\mathbb F_2[[u]]\), it is a domain. Hence
	\(
	R/P=\mathbb Z_2[[x,y]]/(2,y^2-x^3)
	\cong\mathbb F_2[[u^2,u^3]]
	\)
	is a domain, so \(P\) is prime.
	
	\textbf{(4)}
	Under \(x\mapsto t^2\), we have
	\(
	PS=(2,\ y^2-t^6).
	\)
	Put
	\(
	Q=(2,\ y-t^3)\subset S.
	\)
	We prove \(\sqrt{PS}=Q\).
	First, \(PS\subseteq Q\): modulo \(Q\) we have
	\(
	2=0,y=t^3,
	\)
	and hence
	\(
	y^2-t^6=t^6-t^6=0.
	\)
	Therefore \(PS\subseteq Q\). Since
	\[
	S/Q\cong\mathbb Z_2[[t,y]]/(2,y-t^3)\cong\mathbb F_2[[t]],
	\]
	which is a domain, \(Q\) is prime and therefore radical. Hence
	\(
	\sqrt{PS}\subseteq Q.
	\)
	For the reverse inclusion, work modulo \(PS\) and write
	\(
	\overline{S}=S/PS.
	\)
	Because \(2=0\) in \(\overline{S}\), we are in characteristic two.
	The relation
	\(
	y^2-t^6\in PS
	\)
	gives
	\[
	(\overline{y}-t^3)^2
	=
	\overline{y}^2-2t^3\overline{y}+t^6
	=
	\overline{y}^2+t^6
	=
	0
	\]
	in \(\overline{S}\).
	Hence
	\(
	\overline{y}-t^3\in\sqrt{PS}.
	\)
	Therefore \(y-t^3\in\sqrt{PS}\). Since \(2\in PS\subseteq\sqrt{PS}\),
	we get
	\(
	Q=(2,\ y-t^3)\subseteq\sqrt{PS}.
	\)
	Combining the inclusions gives
	$\sqrt{PS}=Q.$
	
	\textbf{(5)}
	Using the radical computation,
	\(
	S/\sqrt{PS}
	\cong
	\mathbb Z_2[[t,y]]/(2,\ y-t^3)
	\cong
	\mathbb F_2[[t]].
	\)
	The ring \(\mathbb F_2[[t]]\) is a one-dimensional regular local ring
	with maximal ideal \((t)\). Hence \(S/\sqrt{PS}\) is regular.
	
	\textbf{(6)}
	We have
	\(
	R/P
	\cong
	\mathbb F_2[[x,y]]/(y^2-x^3)
	\cong
	\mathbb F_2[[t^2,t^3]],
	\)
	which is   not regular.
\end{proof}

{Example} \ref{32} (resp.
Theorem \ref{42}) are about descent problem with respect to complete-intersection (resp. regular). Recall that the ring $R/P$ in both cases is Gorenstein. This leads to find examples where $R/P$ is not Gorenstein, against  $S/\sqrt{PS}$.
 
 \begin{proposition}
 	Let
 	\(
 	R=\mathbb Z_2[[x,y,z]],
 	S=\mathbb Z_2[[t,y,z]],
 	\)
 	and define
 	\(
 	\varphi:R\to S
 	\)
 	by
 	\[
 	\varphi(x)=t^4,\qquad
 	\varphi(y)=y,\qquad
 	\varphi(z)=z.
 	\]
 	Let
 	\(
 	P=(2,\ yz-x^3,\ z^2-xy^2,\ x^2z-y^3)\subset R.
 	\)
 	Then:
 	\begin{enumerate}
 		\item $R$ and $S$ are complete regular local rings of characteristic
 		zero;
 		\item $S$ is finite free of rank $4$ over $R$;
 		\item $P$ is prime;
 	\item
 		\(
 		S/\sqrt{PS},
 		\)
 	 is regular, hence Gorenstein;
 		\item $R/P$ is not Gorenstein.
 	\end{enumerate}
 	Consequently,
 	\(
 	{
 		S/\sqrt{PS}\text{ Gorenstein}
 		\not\Longrightarrow
 		R/P\text{ Gorenstein}.
 	}
 	\)
 \end{proposition}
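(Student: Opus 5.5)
The plan is to follow the template of Theorem \ref{42} closely, with Fact \ref{rel}(iii) replacing Fact \ref{rel}(i). For (1) and (2), the argument is verbatim as in Theorem \ref{42}(1)--(2). Since $S\cong R[T]/(T^4-x)$ and $T^4-x$ is monic, we get $S=R\oplus Rt\oplus Rt^2\oplus Rt^3$. Uniqueness of this decomposition follows by sorting powers of $t$ by their residue modulo $4$.

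For (3), consider $\psi:\mathbb Z_2[[x,y,z]]\to\mathbb F_2[[u]]$ given by $2\mapsto 0$, $x\mapsto u^4$, $y\mapsto u^5$, $z\mapsto u^7$. The three binomials are killed: $yz=u^{12}=x^3$, $z^2=u^{14}=xy^2$, and $y^3=u^{15}=x^2z$. I then repeat the lifting argument of Theorem \ref{42}(3). Write $f=f_0+2f_1$. By Fact \ref{rel}(iii), the kernel of $\mathbb F_2[[x,y,z]]\to\mathbb F_2[[u]]$ is $(yz-x^3,\ z^2-xy^2,\ y^3-x^2z)$, and lifting coefficients gives $\ker\psi=P$. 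Hence $R/P\cong\mathbb F_2[[t^4,t^5,t^7]]$, which is a subring of a domain, so $P$ is prime.

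For (4), I put $Q=(2,\ y-t^5,\ z-t^7)\subset S$. Then $S/Q\cong\mathbb F_2[[t]]$, so $Q$ is prime and regular. Substituting $y=t^5$ and $z=t^7$ shows $PS\subseteq Q$, and therefore $\sqrt{PS}\subseteq Q$. For the reverse inclusion I work in $\overline S=S/PS$, which has characteristic $2$ and satisfies $\bar y\bar z=t^{12}$, $\bar z^2=t^4\bar y^2$ and $\bar y^3=t^8\bar z$. From these, $\bar y^4=\bar y\cdot t^8\bar z=t^{20}$, so $(\bar y-t^5)^4=\bar y^4-t^{20}=0$ by the Frobenius identity in characteristic $2$. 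Likewise $(\bar z-t^2\bar y)^2=\bar z^2-t^4\bar y^2=0$. Then
$$\bar z-t^7=(\bar z-t^2\bar y)+t^2(\bar y-t^5)$$
is a sum of nilpotents, hence nilpotent. Together with $2\in PS$, this gives $Q\subseteq\sqrt{PS}$. So $S/\sqrt{PS}\cong\mathbb F_2[[t]]$ is regular, hence Gorenstein. This radical computation, specifically finding the right combinations $(\bar y-t^5)^4$ and $\bar z-t^2\bar y$, is the step I expect to need the most care.

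For (5), I show that $A=R/P\cong\mathbb F_2[[t^4,t^5,t^7]]$ has Cohen--Macaulay type $2$. The ring $A$ is a one-dimensional domain, hence Cohen--Macaulay, and $x=t^4$ is a parameter. The artinian ring $A/(t^4)$ has $\mathbb F_2$-basis $1,t^5,t^7,t^{10}$, given by the Apéry set $\{0,5,7,10\}$ of the semigroup $\langle 4,5,7\rangle$ with respect to $4$. The socle consists of the classes of monomials $t^s$ with $s+5$, $s+7$ and $s+4$ all reducing to $0$. Among the basis elements, $t^5\cdot t^5=t^{10}\neq 0$, whereas $t^7$ and $t^{10}$ are killed by $t^5$ and $t^7$ (since $12,14,15,17\equiv$ multiples of $4$ plus Apéry-reducible terms all lie in $(t^4)$). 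Thus the socle is spanned by $t^7$ and $t^{10}$, so it is $2$-dimensional and $A$ is not Gorenstein. Equivalently, one can note that $\langle 4,5,7\rangle$ is not symmetric: it has Frobenius number $6$ and gaps $\{1,2,3,6\}$, which number $4\neq 7/2$, and then invoke Kunz's criterion.
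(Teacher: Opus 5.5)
Your proposal is correct and follows the paper's proof step for step: the same presentation $S\cong R[T]/(T^4-x)$, the same map to $\mathbb F_2[[u]]$ via the presentation of $\mathbb F_2[[t^4,t^5,t^7]]$, the same prime $Q=(2,y-t^5,z-t^7)$, and the same nilpotents $z-t^2y$ and $y-t^5$. The differences are minor streamlinings. You obtain $\bar y^4=\bar y\cdot\bar y^3=t^8\bar y\bar z=t^{20}$ in one line, where the paper eliminates $n=z-t^2y$ between two substituted relations. In (5) you check directly that the socle of $A/(t^4)$ is spanned by $t^7,t^{10}$, so the type is $2$, whereas the paper relies only on the non-symmetry of $\langle 4,5,7\rangle$ and Kunz's criterion.
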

 
\begin{proof}

The ring $\mathbb Z_2$ is a complete discrete valuation ring with
 uniformizer $2$ and residue field $\mathbb F_2$.  In particular,
 $\mathbb Z_2$ is a regular local ring of dimension one and
 characteristic zero.
 Consequently
 \(
 R=\mathbb Z_2[[x,y,z]]
 \)
 and
 \(
 S=\mathbb Z_2[[t,y,z]]
 \)
 are complete regular local rings of dimension $4$ and characteristic
 zero.  Their maximal ideals are
 \(
 \mathfrak m_R=(2,x,y,z) \),
 \( \mathfrak m_S=(2,t,y,z).
 \)
The map is induced by the relation
 \(
 t^4=x.
 \)
 Thus
 \(
 S\cong R[T]/(T^4-x),
 \)
 with $T$ mapping to $t$.
 Since $T^4-x$ is monic, every element of $S$ can be written as
 \(
 a_0+a_1t+a_2t^2+a_3t^3\)
where  \(a_i\in R.
 \)
 The representation is unique, so
 \(
 S\cong R\oplus Rt\oplus Rt^2\oplus Rt^3.
 \)
  Hence
 \(
 {S\text{ is finite free of rank }4\text{ over }R.}
 \)
Consider the homomorphism
\(
\theta:\mathbb F_2[[x,y,z]]
\to
\mathbb F_2[[t]]
\)
defined by
\[
x\longmapsto t^4,\qquad
y\longmapsto t^5,\qquad
z\longmapsto t^7.
\]

The three displayed relations vanish:
\[
yz-x^3
\longmapsto
t^{12}-t^{12}=0,
\quad
z^2-xy^2
\longmapsto
t^{14}-t^{14}=0,
\quad
x^2z-y^3
\longmapsto
t^{15}-t^{15}=0.
\]

Let
\(
I:=(yz-x^3,\ z^2-xy^2,\ x^2z-y^3).
\)
Then
\(
I\subseteq\ker\theta.
\)
In view of {Fact} \ref{rel} (iii)
\(
\ker\theta=I.
\)
Since \(\mathbb F_2[[t^4,t^5,t^7]]\) is a subring of the domain
\(\mathbb F_2[[t]]\), it is a domain. 
Therefore
\(
P=(2)+I
\)
is a prime ideal of \(R\), and
\(
R/P\cong\mathbb F_2[[t^4,t^5,t^7]].
\)
Under \(x\mapsto t^4\) we obtain
\(
PS=
(2,\ yz-t^{12},\ z^2-t^4y^2,\ t^8z-y^3).
\)
Put
\(
Q=(2,y-t^5,z-t^7)\subset S.
\)
We prove
\(
\sqrt{PS}=Q.
\)
First, \(PS\subseteq Q\): modulo \(Q\) we have
\[
2=0,\qquad y=t^5,\qquad z=t^7,
\]
and hence
\[
yz-t^{12}=t^5t^7-t^{12}=0,
\quad
z^2-t^4y^2=t^{14}-t^4t^{10}=0,
\quad
t^8z-y^3=t^8t^7-t^{15}=0.
\]
Therefore
\(
PS\subseteq Q.
\)
Since \(Q\) is prime (as \(S/Q\cong\mathbb F_2[[t]]\) is a domain),
it is radical, and hence
\(
\sqrt{PS}\subseteq Q.
\)

For the reverse inclusion, work modulo \(PS\) and write
\(
\overline{S}=S/PS.
\)
Because \(2=0\) in \(\overline S\), we are in characteristic two.
The relation
\(
z^2-t^4y^2\in PS
\)
gives
\(
(z-t^2y)^2
=
z^2-2t^2yz+t^4y^2
=
z^2+t^4y^2
\in PS.
\)
Hence
\(
z-t^2y\in\sqrt{PS}.
\)
Set
$
n=z-t^2y\in\sqrt{PS},$
so 
$n^2=0 { in } S/PS.$
Then \(z=t^2y+n\). Substituting into the two remaining generators
\(yz-t^{12}\in PS\) and \(t^8z-y^3\in PS\), we obtain
\begin{align}
t^2y^2+yn &= t^{12}, \tag{1}\\
t^{10}y+t^8n &= y^3. \tag{2}
\end{align}
Multiply (1) by \(t^8\) we obtain
\(
t^{10}y^2+t^8yn=t^{20}.
\)
From (2),
\(
t^8n=y^3-t^{10}y.
\)
Substituting this into the previous equation gives
\[
t^{10}y^2+y(y^3-t^{10}y)=t^{20}
\implies
t^{10}y^2+y^4-t^{10}y^2=t^{20}
\implies
y^4=t^{20}.
\]
In characteristic two,
\(
y^4-t^{20}=(y-t^5)^4.
\)
Hence
\(
(y-t^5)^4=0
 \text{ in }S/PS,
\)
so
\(
y-t^5\in\sqrt{PS}.
\)
Since \(z-t^2y\in\sqrt{PS}\) and \(y-t^5\in\sqrt{PS}\), we get
\(
z-t^7
=
(z-t^2y)+t^2(y-t^5)
\in\sqrt{PS}.
\)
Therefore
\(
Q=(2,y-t^5,z-t^7)\subseteq\sqrt{PS}.
\)
Combining the inclusions gives
\({\sqrt{PS}=Q.}
\)
Using the above radical computation,
\[
S/\sqrt{PS}
\cong
\mathbb Z_2[[t,y,z]]/(2,y-t^5,z-t^7)
\cong
\mathbb F_2[[t]],
\]
which is regular, and every regular local ring is Gorenstein.
 We have
 \(
 R/P\cong A\cong\mathbb F_2[[t^4,t^5,t^7]].
 \)
 This is well-known that it is not Gorenstein.
 Let us show this.
 \(
 H=\langle4,5,7\rangle.
 \)
 The gaps of $H$ are
 \(
 1,2,3,6.
 \)
 Thus the genus is
 \(
 g(H)=4,
 \)
 and the Frobenius number is
 \(
 F(H)=6.
 \)
 A numerical semigroup $H$ is symmetric if and only if
 \(
 F(H)=2g(H)-1.
 \)
 Here
 \(
 F(H)=6 \)
 {but}
 \( 2g(H)-1=7.
 \)
 Hence $H$ is not symmetric.
 For a one-dimensional complete numerical semigroup ring over a field,
 \(
 \mathbb F_2[[t^H]]
 \)
 is Gorenstein if and only if $H$ is symmetric. 
 Consequently
 \(
 {R/P\text{ is not Gorenstein}.}
 \)
 \end{proof}

The radicalization is a coarse closure operation compared to the integral, tight, and Frobenius closures. Let us ask for further investigation: is there a nice closure operation such as \((-)^{\mathrm{cl}}\) so that pullback respects the property \(\mathcal{P}\) along the following
\[
R/P \xrightarrow{\ \mathrm{flat}\ } S/PS \longrightarrow S/(PS)^{\mathrm{cl}}?
\]
Of course, $(-)^{\mathrm{cl}}:=\mathrm{identy}$ done the job, so we may assume 
$(-)^{\mathrm{cl}}$ is nontrivial.

\end{document}